\RequirePackage[final]{graphicx}
\documentclass[oneside, dvipsnames, svgnames, table]{amsart}

 \synctex=-1

\usepackage[T1]{fontenc}
\usepackage[utf8]{inputenc}

\usepackage{fourier}
\frenchspacing

\usepackage[foot]{amsaddr}
\usepackage{adjustbox}
\usepackage{etoolbox}
\usepackage{ifdraft}

\usepackage{xcolor}

\usepackage[colorlinks=true, citecolor=green!50!black, linkcolor=red!50!black, pagebackref=false, final]{hyperref}

\ifdraft{\usepackage[color,notref, notcite]{showkeys}}{}
\usepackage[letterpaper]{geometry}
\usepackage{xargs}
\usepackage{amsmath, amsthm, mathrsfs, amssymb}
\usepackage{tikz-cd} 
\usepackage[final]{graphicx}
\usepackage{enumitem}
\usepackage{bm}

\usepackage[en-AU]{datetime2}
\usepackage[T1]{fontenc}

\newcommand{\mathbx}[1]{\mathbb{#1}}

\usepackage[colorinlistoftodos,prependcaption,textsize=tiny]{todonotes} 



\theoremstyle{plain}
\ifcscounter{chapter}{%
        \newtheorem{thm}{Theorem}[chapter]%
        }%
        {%
        \newtheorem{thm}{Theorem}[section]%
        }
\newtheorem{theorem}[thm]{Theorem}

\newtheorem{lemma}[thm]{Lemma}

\newtheorem{corollary}[thm]{Corollary}

\newtheorem{proposition}[thm]{Proposition}

\newtheorem*{claim*}{Claim} 
\newtheorem*{thm*}{Theorem}
\newtheorem*{theorem*}{Theorem}

\theoremstyle{definition}

\newtheorem{definition}[thm]{Definition}

\newtheorem{notation}[thm]{Notation}

\theoremstyle{remark}

\newtheorem{remark}[thm]{Remark}

\newtheorem{example}[thm]{Example}

\newtheorem{question}[thm]{Question}


\newcommand{\C}{\mathbx{C}}

\newcommand{\Q}{\mathbx{Q}}
\newcommand{\R}{\mathbx{R}}
\newcommand{\Z}{\mathbx{Z}}

\newcommand{\A}{\mathbx{A}}
\newcommand{\F}{\mathbx{F}}

\newcommand{\FF}{\F}

\newcommand{\aone}{\A^1}
\newcommand{\Aone}{\aone}

\newcommand{\M}{\mathsf{M}}



\newcommand{\Gl}{\operatorname{GL}}
\newcommand{\Mat}{\operatorname{Mat}}

\newcommand{\U}{\operatorname{U}}


\newcommand{\cat}[1]{\text{\bf #1}}

\newcommand{\tensor}{\otimes}
\newcommand{\into}{\hookrightarrow}

\newcommand{\spec}{\operatorname{Spec}}

\newcommand{\Spec}{\spec}

\newcommand{\Sm}{\cat{Sm}}

\newcommand{\sh}[1]{\mathcal{#1}}

\newcommand{\weq}{\simeq}

\newcommand{\sPre}{\cat{sPre}}

\newcommand{\op}{\text{op}}

\newcommand{\GL}{\Gl}

\newcommand{\Hoh}{\mathrm{H}}

\renewcommand{\top}{\mathrm{top}}


\newcommand{\id}{\mathrm{id}}
\newcommand{\isom}{\cong}
\newcommand{\iso}{\isom}

\newcommand{\sm}{\setminus}

\newcommand{\bd}{\partial}

\newcommand{\proj}{\operatorname{proj}}

\newcommand{\MW}{\mathsf{MW}}

\newcommand{\et}{\textup{\'et}}

\newcommand{\bpi}{\bm{\pi}}


\newcommand{\K}{{{\mathbf K}}}

\newcommand{\Mor}{\operatorname{Mor}}
\DeclareMathOperator{\uHom}{\underline{Hom\kern-.05em}\kern.1em}


\usepackage{comment}

\definecolor{highlight}{rgb}{0.99,0.96,0.94}

\usepackage[backref=true,firstinits=true,backend=biber,style=numeric,url=false]{biblatex}
\addbibresource{BenW_Standard_BibTeX5.bib}
\addbibresource{Article.bib}

\newcommand{\sebg}[2][]{\ifdraft{\todo[linecolor=Blue,backgroundcolor=Blue!25,bordercolor=Blue,#1]{#2---Seb G.}}{}}

\newcommand{\benw}[2][]{\ifdraft{\todo[linecolor=Green,backgroundcolor=Green!25,bordercolor=Green,#1]{#2---Ben W.}}{}}

\newcommand{\GW}{\mathbf{GW}}

\newcommand{\kq}{\mathbf{kq}}

\newcommand{\one}{\mathbb{1}}
\newcommand{\taut}{\mathrm{taut}}

\author{W.~S.~Gant and Ben Williams}
\address{Department of Mathematics, the University of British Columbia\\
  1984 Mathematics Rd \\
  Vancouver BC V6T 1Z2\\
Canada.}
\email[W.~S.~Gant]{wsgant@math.ubc.ca}
\email[B.~Williams]{tbjw@math.ubc.ca}

\begin{document}
\title{Free summands of stably free modules}

\begin{abstract}
 Let $R$ be a commutative ring. One may ask when a general $R$-module $P$ that satisfies $P \oplus R \iso R^n$ has a free
 summand of a given rank. M.~Raynaud 
  translated this question into one about sections of certain maps between Stiefel varieties: if $V_r(\A^n)$ denotes the
   variety $\GL(n) / \GL(n-r)$ over a field $k$, then the projection $V_r(\A^n) \to V_1(\A^n)$ has a section if
  and only if the following holds: any module $P$ over any $k$-algebra $R$ with the property that $P \oplus R \iso R^n$
  has a free summand of rank $r-1$. Using techniques from $\A^1$-homotopy theory, we characterize those $n$ for which
  the map $V_r(\A^n) \to V_1(\A^n)$ has a section in the cases $r=3,4$ under some assumptions on the base field.

  We conclude that if $P \oplus R \iso R^{24m}$ and $R$ contains the field of rational numbers, then $P$ contains a free
  summand of rank $2$. If $R$ contains a quadratically closed field of characteristic $0$, or the field of real numbers,
  then $P$ contains a free summand of rank $3$. The analogous results hold for schemes and vector bundles over them.
\end{abstract}
\thanks{We acknowledge the support of the Natural Sciences and Engineering Research Council of Canada (NSERC), RGPIN-2021-02603.}

\subjclass[2020]{13C10, 14F42, 14J60, 57R22}
\keywords{Stably free modules, vector bundles, Stiefel varieties, motivic homotopy groups}
\maketitle

\section{Introduction}
\label{sec:introduction}

Suppose $R$ is a commutative ring, and $n,r$ are integers satisfying $0 \le r \le n$. An $R$-module $P$ is \emph{stably free of type $(n,r)$} if there exists an isomorphism of $R$-modules:
\begin{equation} \label{eq:sfdef}
P \oplus R^{n-r} \iso R^n. 
\end{equation}
The most important nontrivial instance is that of $r=n-1$, since any isomorphism \eqref{eq:sfdef} entails an isomorphism
\[ 
 (P \oplus R^{n-r-1}) \oplus R \iso R^n,
\]
so that $P \oplus R^{n-r-1}$ is stably free of type $(n,n-1)$. In a sense that is made precise in \cite[Th\'eor\`eme 6.5]{Raynaud1968}, a general stably free module of type $(n,n-1)$ does not admit a free summand of large rank, and is \textit{a fortiori} not free.

We set up some notation. Let $k$ be a commutative base ring. The letter $k$ may be omitted from the notation where no confusion can arise. Fix a pair of integers $(n,r)$ where $0 \le r \le n$.
As set out in \cite{Raynaud1968}, there is a commutative $k$-algebra $A_{n,n-r}$ and a stably free module $P_{n,n-r}$ of type $(n,r)$ over $A_{n,n-r}$ that is universal: for any commutative $k$-algebra $R$ and any stably free $R$-module $P$ of type $(n,r)$,  there is a ring homomorphism $A_{n,n-r} \to R$ such that $P \cong R \otimes_{A_{n,n-r}} P_{n,n-r}$ as $R$-modules. 

There exist a sequence of positive integers called \emph{James numbers} and written $b_2, b_3, \dots$, which were defined by James \cite{James1958}, and calculated by \cite{Atiyah1960} and \cite{Adams1965}.
Explicitly, they are described by their $p$-adic valuations, $v_p$, for all primes $p$:
\[ v_p(b_q) = \begin{cases} \max\left\{s + v_p(s) \:\Big|\: 1 \le s \le \lfloor \frac{ q-1 }{p-1} \rfloor \right\}, \quad \text{ if  $q \ge p$;}  \\ 
0 \qquad \text{ otherwise.} \end{cases} \]
The first few James numbers may easily be listed:
\[ b_2 = 2; \quad b_3=b_4=2^3 3=24; \quad b_5=2^6 3^2 5=2880. \]

The following is the module-theoretic content of \cite[Th\'eor\`eme 6.5]{Raynaud1968}.
\begin{theorem*}[Raynaud]
   If $k$ is a field of characteristic $0$, then the universal stably free module $P_{n,n-1}$ of type $(n,n-1)$ does not admit a free summand of rank $q-1$, except possibly if $b_q \mid n$.
\end{theorem*}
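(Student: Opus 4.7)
The plan is to convert the algebraic question into a topological one using Raynaud's dictionary and complex realisation, then appeal to the classical computation of James numbers.

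First, by Raynaud's translation, $P_{n,n-1}$ admits a free summand of rank $q-1$ if and only if the projection of Stiefel varieties $V_q(\A^n_k) \to V_1(\A^n_k)$ admits a section as $k$-schemes. The intuition is that a free summand of rank $q-1$ in $P_{n,n-1}$, combined with the tautological splitting $R \to R^n$ coming from the isomorphism $P_{n,n-1} \oplus R \iso R^n$, produces $q$ linearly independent vectors in $R^n$, and hence a lift of the tautological unimodular vector on $V_1$ to a point of $V_q$. The converse is analogous: a section of $V_q \to V_1$ furnishes the extra $q-1$ universal free directions.

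Second, because $k$ has characteristic $0$, fix an embedding $k \hookrightarrow \C$ and base change. Taking $\C$-points with the classical topology converts an algebraic section into a continuous section of the map $V_q(\C^n) \to V_1(\C^n) \simeq S^{2n-1}$ of complex Stiefel manifolds. Now invoke the classical theorem of James, Atiyah--Todd, and Adams: the complex Stiefel fibration $\U(n)/\U(n-q) \to S^{2n-1}$ admits a continuous section if and only if $b_q \mid n$. Contraposing, if $b_q \nmid n$ then no topological section exists, so no algebraic section exists, and hence $P_{n,n-1}$ has no free summand of rank $q-1$.

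The principal obstacle is the classical topological input used in the second step, but this is a celebrated result proved by computing obstruction classes in the complex $K$-theory of $\C\P^{q-1}$ via the Adams operations; the explicit formula for $v_p(b_q)$ stated earlier is exactly the $p$-local content of that $K$-theoretic calculation. The essential algebro-geometric content of Raynaud's theorem is the first step — the translation between free summands of stably free modules and sections of Stiefel variety bundles — and this is the insight that will underpin the $\A^1$-homotopy approach taken in the rest of the paper, where the topological input at the end will have to be replaced by a motivic analogue.
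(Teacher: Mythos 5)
The paper does not actually prove this statement: it is quoted from Raynaud, and the method the paper attributes to him is Steenrod operations in \'etale cohomology. Your argument is therefore a genuinely different route in characteristic $0$: Raynaud's dictionary between free summands of $P_{n,n-1}$ and sections of $V_q(\A^n)\to V_1(\A^n)$ (which is \cite[Proposition 2.4]{Raynaud1968}, exactly as the paper uses it), followed by passage to $\C$-points and the classical theorem of James, Atiyah--Todd and Adams. This is the same realization philosophy the rest of the paper exploits, run in the negative direction; what it gives up relative to Raynaud's intrinsic \'etale argument is any purchase on positive or mixed characteristic, but for the statement as given (char $0$) it is a legitimate proof.

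Two points need repair before the argument is complete. First, an arbitrary field $k$ of characteristic $0$ need not embed in $\C$ (its cardinality may exceed that of the continuum), so ``fix an embedding $k\hookrightarrow \C$'' is not available in general. The fix is standard: a scheme-theoretic section of $V_q(\A^n_k)\to V_1(\A^n_k)$ is given by finitely many polynomials, hence is already defined over a finitely generated subfield $k_0\subseteq k$; such a $k_0$ does embed in $\C$, and one base changes from $k_0$ to $\C$. Second, taking $\C$-points of the affine model $\{(A,B): AB^T=I_q\}$ produces a strict continuous section of $V_q(\A^n)(\C)\to V_1(\A^n)(\C)$, which is only homotopy equivalent, compatibly with the projections, to the unitary Stiefel fibration $W_q(\C^n)\to S^{2n-1}$; this yields a homotopy section of the latter, and one must invoke the fibration property (as in Remark \ref{rem:deformHomotopySections}) to straighten it into the strict cross-section to which the James--Atiyah--Todd--Adams theorem applies. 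Both repairs are routine, but as written the proposal elides them.
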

This result does not make any assertion about the situation when $b_q \mid n$. It is well known that if $n$ is even (\textit{viz.} divisible by $b_2$), then $P_{n,n-1}$ admits a free summand of rank $1$, as is shown in Example \ref{ex:r=2Sec} below. The cases of larger $q$ are more obscure.

In this paper, we prove the following. This is the module-theoretic content of Theorem \ref{th:schThryMain}.
\begin{theorem*} \label{th:main}     
 Suppose $R$ is a commutative ring containing $\Q$. Let $n$ be a natural number. If $P$ is a stably free module of type $(24n, 24n-1)$, then $P$ admits a free summand of rank $2$. 
 If $R$ contains a subfield of $\R$ that has a unique quadratic extension (up to isomorphism) then $P$ admits a free summand of rank $3$.
\end{theorem*}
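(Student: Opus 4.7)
The plan is to reduce the stated result to a question about the existence of algebraic sections of Stiefel variety projections, and then to attack that question using $\aone$-homotopy theory.

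First, by the Raynaud translation recalled in the introduction, it suffices to establish that the projection $V_3(\A^{24n}) \to V_1(\A^{24n})$ admits a section over $\Q$ (for the rank-$2$ statement), and that $V_4(\A^{24n}) \to V_1(\A^{24n})$ admits a section over any subfield $k \subset \R$ having a unique quadratic extension (for the rank-$3$ statement). Indeed, such a section, combined with the universal stably free module $P_{24n, 24n-1}$ over $A_{24n, 24n-1}$ and the classifying ring map $A_{24n, 24n-1} \to R$ associated to the given $P$, yields the desired free summand inside $P$ by tensoring.

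Second, I would treat each section problem by motivic obstruction theory. The base $V_1(\A^{24n}) = \A^{24n} \sm 0$ is $\aone$-equivalent to a motivic sphere, while the fibre of $V_r(\A^{24n}) \to V_1(\A^{24n})$ is $V_{r-1}(\A^{24n-1})$. A section is produced by a successive lift along the Moore--Postnikov tower of the fibre; the obstructions live in $\aone$-cohomology groups of the base with coefficients in the motivic homotopy sheaves $\piaone_{*,*}(V_{r-1}(\A^{24n-1}))$. In the relevant bidegrees these sheaves are computable in terms of Milnor--Witt K-theory $\K^{\MW}_*$ of the base field together with contributions from motivic stable stems of low degree.

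Third, the classical James--Atiyah--Adams computation supplies the topological input: since $b_3 = b_4 = 24$, the ``topological'' part of each obstruction vanishes once $24 \mid n$. For $r = 3$ over $\Q$ no further obstruction survives, and a section is produced directly. For $r = 4$ there is, however, a genuinely motivic, Witt-theoretic obstruction (coming from the fact that $\piaone_{*,*}$ of a Stiefel variety has a quadratic-form component absent from the complex topological picture) that must be dealt with separately; the hypothesis that $k$ has a unique quadratic extension ensures that the relevant Grothendieck--Witt or Milnor--Witt group in which this obstruction lives is small enough to force the class to vanish. The main obstacle is precisely this $r = 4$ analysis: both identifying the Witt-theoretic obstruction cleanly and verifying that the quadratic-extension hypothesis kills it. The James-number input handles the topological piece uniformly, but the Witt-theoretic piece has no classical analogue, and so requires a new, purely motivic argument; the remainder of the Moore--Postnikov bookkeeping should be routine once that class is understood.
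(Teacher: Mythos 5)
Your overall strategy (Raynaud's reduction to sections of $V_r(\A^{24n}) \to V_1(\A^{24n})$, then motivic homotopy theory with topological input from the James numbers) matches the paper's, but there are two genuine gaps. First, an obstruction-theoretic argument only ever produces a \emph{homotopy} section, and unlike the topological situation $\rho$ is not a fibration that one can lift against, so a homotopy section cannot simply be deformed to a strict one. The paper must pass back to an honest scheme-theoretic section (or at least to the module-theoretic conclusion) via the affine representability theorem of Asok--Hoyois--Wendt, which replaces the abstract homotopy class by a naive $\aone$-homotopy of scheme morphisms, combined with the Lindel--Popescu homotopy invariance theorem and Raynaud's Proposition~2.4. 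Your proposal skips this step entirely; it is not routine and cannot be omitted. Relatedly, for $r=3$ your assertion that ``no further obstruction survives'' over $\Q$ is exactly the point that needs proof: the single obstruction lives in $\bpi_{n-2+n\alpha}(V_2(\A^{n-1}))(\Q)$, and one must show that the complex realization map out of this group is \emph{injective} (in fact it is an isomorphism), which requires the R\"ondigs--Spitzweck--{\O}stv{\ae}r computation of the first stable stem and a diagram chase in the long exact sequence of the fibration $Q_{2n-5} \to V_2(\A^{n-1}) \to Q_{2n-3}$, using that the tail consists of Witt sheaves linked by multiplication by $\eta$.

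Second, your proposed mechanism for $r=4$ does not work as described. Under the hypothesis that $k \subset \R$ has a unique quadratic extension, the group in which the relevant class lives is (after identifying it with a stable stem) $\K^\M_1(k)/(2) \iso k^\times/(k^\times)^2 \iso \Z/(2)$, which is \emph{not} zero, so the class is not forced to vanish by smallness of the group. The actual argument is the opposite of ``purely motivic with no classical analogue'': one writes the obstruction $\beta_4^n$ as $i_*\bd(\psi)$ for the $r=3$ section $\psi$, observes that the quadratic-extension hypothesis makes the \emph{real} realization map on the middle group an isomorphism onto $\pi_{n-2}(S^{n-4}) \iso \Z/(2)$, identifies the realized class with James's classical obstruction to sectioning $W_4(\R^n) \to S^{n-1}$ (which vanishes by James, 1957), and then checks injectivity of $\pi_{n-2}(S^{n-4}) \to \pi_{n-2}(W_3(\R^{n-1}))$ from Paechter's tables. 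Since you explicitly defer the $r=4$ analysis, the second half of the theorem is unproved in your proposal, and the first half is missing the homotopy-section-to-section bridge.
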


The result also applies with a $k$-scheme $X$ playing the part of the $k$-algebra $R$. In this case the stably free module becomes a stably trivial vector bundle.

\begin{remark} \label{rem:dontNeedQuadClosed}
    The condition on $R$ in the second part of the theorem above is satisfied if $R$ contains a characteristic-$0$ quadratically closed field $F$. To see why, we argue as follows. The field $F$ contains a quadratically closed subfield $E$ consisting of algebraic numbers, and $E$ may be embedded in $\C$. Let $i$ denote a square root of $-1\in\C$, and let $z \mapsto \bar z$ denote ordinary complex conjugation. The subfield of $E$ fixed by conjugation is $E'=E\cap \R$. We claim that $E'$ meets the conditions of the theorem. 
    
    By construction $E' \subseteq \R$. When viewed as a vector space over $E'$, the field $E$ decomposes as a direct sum of eigenspaces for complex conjugation, so that we see $E=E' \oplus i E'$, which implies that $E=E'(i)$. Since $E$ is quadratically closed, we deduce that it is the unique quadratic extension of $E'$, up to isomorphism. 
\end{remark}


\subsection*{Geometry}

The methods of \cite{Raynaud1968} are geometric and homotopy-theoretic, and so too are the methods of this paper. We specialize to the case where the base ring $k$ is a field of characteristic $0$.

If $a \le b$ are natural numbers, we embed the group scheme $\GL(a)$ into $\GL(b)$ by 
\[ A \mapsto \begin{bmatrix} I_{b-a} & 0 \\ 0 & A
\end{bmatrix}.\]
We let
\[ V_r(\A^n) = \GL(n)/\GL(n-r)\] denote the Stiefel variety. There is a canonical projection $\rho: V_r(\A^n) \to V_1(\A^n)$. 

The ring $A_{n,n-r}$ that we referred to previously is the coordinate ring of $V_r(\A^n)$, and $P_{n,n-r}$ is a module over it. As a special case of \cite[Proposition 2.4]{Raynaud1968},
the map $\rho: V_r(\A^n) \to V_1(\A^n)$ has a section if and only if $P_{n,n-1}$ has a free summand of rank $r-1$.  Therefore, the question of whether all stably free modules of type $(n,n-1)$ (over $k$-algebras) admit free summands of rank $r-1$ is equivalent to:
\begin{question}\label{qu:1}
    Does the morphism of $k$-schemes $\rho: V_r(\A^n) \to V_1(\A^n)$ admit a section?
\end{question}

James answered the topological analogue of this question in \cite{James1958}: if $W_r(\C^n)$ denotes the complex Stiefel manifold of orthonormal $r$-frames in $\C^n$, then projection onto the first frame $\rho_\C: W_r(\C^n) \to S^{2n-1}$ has a continuous section if and only if $n$ is divisible by the integer $b_r$.

Using Steenrod operations in \'{e}tale cohomology, Raynaud showed that, over a characteristic $0$ field $k$, the map $\rho: V_r(\A^n) \to V_1(\A^n)$ does not have a section if $n$ is not divisible by $b_r$ (this is the geometric content of \cite[Th\'{e}or\`{e}me 6.5]{Raynaud1968} above). 

\subsection*{Method}

The method of proof in this paper is to convert the algebro-geometric problem of Question \ref{qu:1} to a problem in $\A^1$-homotopy theory. This mimics how an analogous question about vector bundles on topological spaces has been fully solved by the methods of homotopy theory and the calculation of certain periodicities, by \cite{James1958}, \cite{Toda1955}, \cite{Atiyah1960} and \cite{Adams1965}. The structure of the topological argument is to reduce the problem to determining whether a certain class in $\pi_{2n-2}(W_{r-1}(\C^{n-1}))$ vanishes, which then may be calculated using techniques developed by Adams.

The analogous obstruction in $\A^1$-homotopy theory is defined in Notation \ref{not:defBeta}. In Proposition \ref{pr:equivOfSections}, we show that the existence of a section is equivalent to the vanishing of the obstruction. One direction of this is trivial. To construct the section knowing that the obstruction vanishes, however, we  use the Lindel--Popescu Theorem  (\cite{Lindel1981}) about homotopy invariance of algebraic vector bundles, an observation of \cite[Proposition 2.4]{Raynaud1968}, and the result of \cite[Theorem 2.4.2]{Asok2018} which relates abstract morphisms in the $\Aone$-homotopy category to naive homotopy classes of morphisms of schemes.

Having converted the problem to one in $\Aone$-homotopy theory, we now solve it in Propositions \ref{pr:r=3New} and \ref{pr:r=4} by using realization methods: comparing the global sections of $\Aone$-homotopy sheaves of spaces with the homotopy groups of their real- or complex-realizations. For the Stiefel varieties at issue, we can prove that the comparison maps in question are isomorphisms. In this way, we show that the answers to the algebraic question is ``the same'' as the answer for complex vector bundles.

The major inputs are the calculations of the stable homotopy sheaves of spheres by \cite{Rondigs2019} and \cite{Rondigs2024} and the Freudenthal suspension theorem of \cite{Asok2024}, by which we can understand unstable $\Aone$-homotopy sheaves of spheres, from which we can calculate the unstable $\Aone$-homotopy sheaves of Stiefel varieties.

Finally, to establish the stronger form of our main theorem when $k$ is quadratically closed, or is a subfield of $\R$ admitting only one quadratic extension, we use an argument from \cite{James1958}, now applied to the $\Aone$-homotopy sheaves.

\subsection*{Positive and mixed characteristic}

The morphism $\rho : V_r(\A^n) \to V_1(\A^n)$ may be defined over $\Z$, and over $\Z$ the spaces $V_r(\A^n)$ still represent stably free modules. In Proposition \ref{pr:r=3New}, we prove that when $r=3$ and $n=24m$, the base-change morphism $\rho$ over $\Q$ has a section. In particular, this means that $\rho$ has a section over $\Z[\frac{1}{N}]$, where $N$ is some positive integer: only finitely many primes need to be inverted in order to construct the section. We remark that \textit{a priori} the integer $N$ may depend on $n$.

In particular, we can declare that for any given $n=24m$, there exists some integer $N$ so that if $R$ is a ring in which $N$ is invertible and $P$ is a stably free $R$-module of type $(n,n-1)$, then $P$ has a free summand of rank $2$. We do not know the smallest possible positive integer $N$, although $N=6$ is a plausible conjecture.

\medskip

One might also wonder whether the methods used to prove Proposition \ref{pr:r=3New} can be made to work over a prime field $\FF_p$ of characteristic $p$. There are two difficulties: most seriously, the main results \cite{Rondigs2019} and \cite{Rondigs2024} do not fully determine the homotopy groups in question: the $p$-torsion is not determined, since the exponential characteristic of the ground field must be inverted throughout.

Secondly, the complex realization functor must be replaced by $\ell$-\'etale realization (see e.g., \cite{Isaksen2004a}), which takes values in $\ell$-complete spaces or pro-spaces where $\ell \neq p$. We expect our obstructions to lie in groups isomorphic to $\Z/(24)$ contingent on a strengthening of \cite{Rondigs2019} and \cite{Rondigs2024} that holds without inverting the exponential characteristic. Therefore we will have to use realization for the primes $2, 3$, which implies that the constraint $p \neq 2,3$ is probably unavoidable.

\section{Homotopy-theory conventions and notation}
\label{sec:conventions-notation}

For the rest of the paper, we work over a field $k$ of characteristic $0$. Other
restrictions may be imposed on $k$ from time to time. Unless otherwise stated, all schemes appearing are
$k$-schemes. The category of \emph{motivic spaces} over $k$ is $\sPre(\Sm_k)$, the category of simplicial presheaves of
sets on $\Sm_k$, which is itself the category of smooth finite type separated $k$-schemes. We use the homotopy theory of
\cite{Morel1999}. The notation $\cat{H}(k)$ is used for the homotopy category. A \emph{pointed object} consists of an object $X$ and a morphism $\Spec k \to X$. There is a homotopy theory of pointed objects, and the associated homotopy category is denoted $\cat{H}(k)_\bullet$. The notation $X_+$ is used to denote the addition of a disjoint basepoint to $X$.

\subsection{Homotopy sheaves}
\label{sec:homotopy-sheaves}

The paper makes extensive use of the $\Aone$-homotopy theory of spheres. If $p,q$ are nonnegative integers, then we define 
\[ S^{p+q\alpha} = S^{p+q, q} = S^p \wedge (\A_k^1 \sm \{0\})^{\wedge q} \]
where $S^p$ is the ordinary simplicial sphere.

If $X$ is a pointed object of $\sPre(\Sm_k)$, then we write
\[ \bpi_{p+q\alpha}(X) = \bpi_{p+q, q}(X) \]
for the Nisnevich sheaf associated to the functor
\[ \Sm_k^\op \to \cat{Set}: \: U \mapsto [U_+ \wedge S^{p+q\alpha}, X]. \]
When $p \ge 1$, then $\bpi_{p+q\alpha}(X)$ is a sheaf of groups, and of abelian groups if $p \ge 2$.

The symbols $\K^\MW_n$ and $\K^\M_n$ will be used for the unramified sheaves constructed in \cite[Section 3.2]{Morel2012}. We make use of the \emph{contraction} $\mathbf{A} \mapsto \mathbf{A}_{-1}$, for which we refer to \cite[p.~33]{Morel2012} and \cite[Theorem 6.13]{Morel2012}: which implies that
\[ \bpi_{p+q\alpha}(X)_{-1} \iso \bpi_{p+(q+1)\alpha}(X). \]

\subsection{Naive homotopy}
\label{sec:naive-homotopy-1}

If $X$ is a scheme, we write $j_0, j_1: X \to X \times \A^1_k$ for the closed inclusions at $0$, $1$ respectively.

Two morphisms $f_0, f_1: X \to Y$ are said to be \emph{naively homotopic} if there is a morphism
$H : X \times \Aone_k \to Y$ such that $H \circ j_0 = f_0$ and $H \circ j_1 = f_1$.

\subsection{Pointed homotopy}
\label{sec:pointed-homotopy}

If $X$, $Y$ are objects of $\sPre(\Sm_k)$, then we will write $\{X,Y\}$ for the set of morphisms $X \to Y$ in the homotopy category $\cat{H}(k)$. If $X$ and $Y$ are pointed objects of $\sPre(\Sm_k)$, then the notation $[X,Y]$ will be used to denote the set of morphisms $X\to Y$ in $\cat{H}(k)_\bullet$. There is a natural bijection
\[ \{X, Y\} \leftrightarrow [X_+,Y]. \]

\begin{proposition} \label{pr:basepointsMayBeIgnored}
  Suppose $X,Y$ are two pointed objects of $\sPre(\Sm_k)$, and $Y$ is $\Aone$-simply-connected. The natural map
  \[[X,Y] \to \{X,Y\} \]
  is a bijection.
\end{proposition}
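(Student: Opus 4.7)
The plan is to realize the forgetful map as the map on pointed homotopy classes induced by a natural cofibre sequence, and then to read off the result from the low-dimensional $\Aone$-homotopy sheaves of $Y$.

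Under the standard bijection $\{X,Y\} \leftrightarrow [X_+, Y]$, the forgetful map $[X, Y] \to \{X, Y\}$ corresponds to $c^*\colon [X, Y] \to [X_+, Y]$, where $c\colon X_+ \to X$ is the collapse which is the identity on $X$ and sends the disjoint basepoint to the chosen basepoint of $X$. The map $c$ sits in a pointed cofibre sequence
\[ S^0 \longrightarrow X_+ \xrightarrow{\;c\;} X, \]
in which $S^0 = (\Spec k)_+$ includes as the two basepoints of $X_+$. The cofibre of $c$ is $\Sigma S^0 \simeq S^1$, the simplicial circle.

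Applying $[-,Y]$ in $\cat{H}(k)_\bullet$ yields the Puppe exact sequence of pointed sets
\[ [S^1, Y] \longrightarrow [X, Y] \xrightarrow{\;c^*\;} [X_+, Y] \longrightarrow [S^0, Y], \]
in which $[S^0, Y]$ is naturally identified with the pointed set $\bpi_0^{\Aone}(Y)(k)$ and $[S^1, Y]$ with the group $\bpi_1^{\Aone}(Y)(k)$. Since $Y$ is assumed $\Aone$-simply-connected, both of these are trivial. Exactness at $[X_+, Y]$ then forces every element of $[X_+, Y]$ to lie in the image of $c^*$, so $c^*$ is surjective. For injectivity, one invokes the standard fact that, for a cofibre sequence $A \to B \to C$, two classes in $[C, Y]$ have the same image in $[B, Y]$ if and only if they lie in a common orbit of the action of $[\Sigma A, Y]$ on $[C, Y]$; here $[\Sigma A, Y] = [S^1, Y] = 0$, so the orbits are singletons and $c^*$ is injective.

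The only point requiring any care is to ensure that the Puppe sequence and its action-on-fibres formalism apply in the motivic setting of $\sPre(\Sm_k)$. This is standard once $Y$ is replaced by an $\Aone$-fibrant model, so that pointed mapping spaces compute $[-,Y]$ and cofibrations become honest cofibrations; the rest of the argument is formal, and closely mirrors the classical topological statement that for a simply-connected target, the forgetful map from pointed to unpointed homotopy classes is a bijection.
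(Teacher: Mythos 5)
Your proposal is correct and takes essentially the same route as the paper: the paper applies $\Map_\bullet(-,Y)$ to the very same cofibre sequence $S^0 \to X_+ \to X$ to obtain the $\Aone$-fibre sequence $\Map_\bullet(X,Y)\to\Map(X,Y)\to Y$ and then takes $\pi_0$ of global sections, which is exactly your Puppe exact sequence together with the $[S^1,Y]$-action used for injectivity.
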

\begin{proof}
  We work in the $\Aone$-injective model category. There is nothing to be lost in assuming $X$ is cofibrant and $Y$ is fibrant.

    There is a cofibre sequence of pointed objects
    \begin{equation}\label{eq:cofSeq}
         S^0 \to X_+ \to X.
    \end{equation}

We write $\uHom$ for the mapping object of $\sPre(\Sm_k)$ and $\uHom_\bullet$ for its pointed analogue. Apply $\uHom_\bullet(-, Y)$ to \eqref{eq:cofSeq} to obtain an $\Aone$-homotopy fibre sequence:
  \[ \uHom_\bullet(X,Y) \to \uHom(X,Y) \to Y. \] Since $Y$ is assumed simply-connected, the result may be deduced by
  taking global sections of $\pi_0$ applied to this fibre sequence.
\end{proof}

\section{Stiefel varieties}
\label{sec:stiefel-varieties}

\subsection{Some notation}

We frequently adopt the functor-of-points approach to $k$-schemes. That is, a $k$-scheme $Y$ represents a contravariant functor on the category $\cat{Sch}_k$ of $k$-schemes
\[ h_Y: \cat{Sch}_k^{\op} \to \cat{Set}, \qquad h_Y(X)=\Mor_{\cat{Sch}_k}(X, Y). \]
In fact, we may restrict the source of the functor to the category of affine $k$-schemes, or equivalently to $k$-algebras:
\[ h_Y: k\text{-}\cat{Alg} \to \cat{Set}, \qquad h_Y(R) =\Mor_{\cat{Sch}_k}(\Spec R, Y). \]
The assignment $Y \mapsto h_Y$ yields a full embedding of $\cat{Sch}_k$ in the category of functors $k\text{-}\cat{Alg} \to \cat{Set}$, \cite[Proposition VI-2]{Eisenbud2000}. As a consequence, we will abuse notation and write ``$Y$'' when ``$h_Y$'' is technically correct. Furthermore, we will specify $k$-schemes $Y$ by describing their functors of points $R \mapsto Y(R)$, and will define morphisms of $k$-schemes $Y \to Y'$ by giving the associated natural transformation of functors.

\subsection{Definitions}

If $r \le n$ are two natural numbers, then we define $V_r(\A^n)$ to be the affine $k$-scheme representing the
functor
\begin{equation}
 R \mapsto \left\{ (A, B) \in (\Mat_{r \times n}(R))^2 \mid A B^T  = I_r \right\}.\label{eq:1}
\end{equation}
This is a closed subscheme of $\Mat_{r \times n}^2$, and is therefore affine. We consider $V_r(\A^n)$ as a pointed object of $\sPre(\Sm_k)$ with basepoint given by the $k$-rational point
\[ \left(\begin{bmatrix} I_r & 0 \end{bmatrix}, \begin{bmatrix} I_r & 0 \end{bmatrix} \right). \]

\begin{remark}
  The spaces $V_r(\A^n)$ are a kind of Stiefel variety. The obvious projection furnishes an $\Aone$-equivalence from $V_r(\A^n)$ to the $k$-scheme $V'_r(\A^n)$
  representing the functor
  \begin{equation*}
    R \mapsto \left\{ A \in \Mat_{r \times n}(R) \mid \exists B \in \Mat_{r \times n}(R), \text{ s.t. }A B^T  = I_r \right\},
  \end{equation*}
  which might be considered the true Stiefel variety. There is a morphism $p:V_r(\A^n) \to V'_r(\A^n)$ given by forgetting the choice of $B$. One may cover $V'_r(\A^n)$ by Zariski-open subschemes $U$ so that the morphism $p|_{p^{-1}(U)} : p^{-1}(U) \to U$ is isomorphic to the projections $U \times \A^{(n-r)r} \to U$, i.e., $p$ is a Zariski-locally trivial, smooth morphism with affine-space fibres, and therefore by a standard argument, $p$ is an $\Aone$-equivalence (see e.g., \cite[Lemma 2.4]{Asok2007}).
\end{remark}

By forgetting the bottom $r-r'$ rows, we obtain a pointed morphism $\rho_{r,r'} : V_r(\A^n) \to V_{r'}(\A^n)$. This will be written
$\rho$ when there is no risk of ambiguity.

Two cases of $V_r(\A^n)$ have notation of their own:
\[ V_1(\A^n) = Q_{2n-1}, \quad V_n(\A^n) = \GL(n). \]
We remark that $Q_{2n-1}$ is $\Aone$-homotopy equivalent to $\A^n\sm\{0\}\weq S^{n-1+n\alpha}$.

If we embed $\GL(n-r)$ in $\GL(n)$ in the lower-right position
\[ A \mapsto
  \begin{bmatrix}
    I_r & \\ & A
  \end{bmatrix}
\]
then we arrive at a quotient presentation $V_r(\A^n) = \GL(n)/\GL(n-r)$.

If $\F$ is $\R$ or $\C$, we denote the Stiefel manifold of orthonormal $r$-frames in $\F^n$ by $W_r(\F^n)$.

\subsection{Fibre sequences}

Using results from \cite[Chapter 6]{Morel2012} and \cite[Proposition 8.11]{Morel2012}, we have a diagram of pointed spaces
\begin{equation}
    \GL(n-r) \to \GL(n) \to V_r(\A^n) \overset{f}{\to} B \GL(n-r) \to B\GL(n) \label{eq:5termsequence}
\end{equation}
in which any three consecutive terms form an $\Aone$-homotopy fibre sequence. 

From here, standard homotopy theory, e.g.,
\cite[Proposition 6.3.6]{Hovey1999}, implies that the induced sequence of quotients 
\begin{equation}
  V_s(\A^{n-r+s}) \to V_r(\A^n) \overset{\rho}{\to} V_{r-s}(\A^n)\label{eq:2}
\end{equation}
is an $\Aone$-homotopy fibre sequence whenever $n,r,s$ are integers satisfying $0 \le s <r \le n$.

\begin{proposition} \label{pr:connStiefel}
  The space $V_r(\A^n)$ is $\Aone$-$(n-r-1)$-connected.
\end{proposition}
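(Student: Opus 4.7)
The plan is to prove this by induction on $r$, using the $\Aone$-homotopy fibre sequences \eqref{eq:2} that have already been established.

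For the base case $r=1$, recall that $V_1(\A^n) = Q_{2n-1}$ is $\Aone$-equivalent to $\A^n \sm \{0\} \weq S^{n-1+n\alpha}$, as already noted after \eqref{eq:1}. By Morel's $\Aone$-connectivity theorem for motivic spheres of the form $S^p \wedge \Gm^{\wedge q}$ (see \cite[Chapter 6]{Morel2012}), the space $S^{n-1+n\alpha}$ is $\Aone$-$(n-2)$-connected. This matches the desired bound $n-r-1 = n-2$.

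For the inductive step, assume the claim holds for $r-1$. Taking $s = r-1$ in the fibre sequence \eqref{eq:2} gives an $\Aone$-homotopy fibre sequence
\[
V_{r-1}(\A^{n-1}) \to V_r(\A^n) \overset{\rho}{\to} V_1(\A^n).
\]
The induction hypothesis applied to the fibre says that $V_{r-1}(\A^{n-1})$ is $\Aone$-$((n-1)-(r-1)-1) = \Aone$-$(n-r-1)$-connected. The base $V_1(\A^n)$ is $\Aone$-$(n-2)$-connected by the base case, and since $r \ge 1$ we have $n-2 \ge n-r-1$, so the base is also at least $\Aone$-$(n-r-1)$-connected. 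The long exact sequence of $\Aone$-homotopy sheaves associated to the fibre sequence then yields $\bpi_i^{\Aone}(V_r(\A^n)) = 0$ for $1 \le i \le n-r-1$, and path-connectedness of $V_r(\A^n)$ follows from that of the fibre and base, giving the desired conclusion.

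No serious obstacles are expected: the only points requiring minor care are (i) invoking Morel's connectivity theorem in the correct form for the base case, and (ii) ensuring that the quotient sequence \eqref{eq:2} is genuinely an $\Aone$-homotopy fibre sequence of pointed objects so that the long exact sequence of $\Aone$-homotopy sheaves is available. Both have already been recorded in the excerpt, so the proof reduces to a clean inductive bookkeeping argument.
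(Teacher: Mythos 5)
Your proof is correct and follows essentially the same strategy as the paper: the base case $r=1$ via Morel's connectivity theorem applied to $V_1(\A^n)\weq S^{n-1+n\alpha}$, followed by induction on $r$ over the fibre sequences \eqref{eq:2}. The only cosmetic difference is that you use the $s=r-1$ instance (sphere as base, induction hypothesis applied to the fibre $V_{r-1}(\A^{n-1})$), whereas the paper uses the $s=1$ instance (sphere as fibre, induction hypothesis applied to the base $V_{r-1}(\A^n)$); both yield the same conclusion from the long exact sequence of $\Aone$-homotopy sheaves.
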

\begin{proof}
    In the case where $r=1$, we use the $\Aone$-equivalence $V_1(\A^n) \weq S^{n-1+n\alpha}$. The sphere is $n-2$-connected by results of Morel (\cite[Theorem 6.38]{Morel2012}).

    The general result now follows by induction on $r$, using the $s=1$ case of \eqref{eq:2}.
\end{proof}

\begin{corollary} \label{cor:pointedUnpointedStiefel}
  Suppose $n, r$ are natural numbers satisfying $r \le n-2$. Then for any pointed object $X$ of $\sPre(\Sm_k)$, the natural map
  \[ [X, V_r(\A^n)] \to \{X, V_r(\A^n)\} \]
  is a bijection.
\end{corollary}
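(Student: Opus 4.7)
The plan is to deduce this immediately from the two preceding results in the section. The hypothesis $r \le n-2$ rearranges to $n-r-1 \ge 1$, so Proposition \ref{pr:connStiefel} tells us that $V_r(\A^n)$ is $\Aone$-$(n-r-1)$-connected, and in particular $\Aone$-simply-connected. Once we have $\Aone$-simple connectivity of the target, Proposition \ref{pr:basepointsMayBeIgnored} applies verbatim to give the desired bijection.

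So the entire proof is a two-line assembly: first verify the connectivity inequality, then invoke the two previous propositions. There is no real obstacle, since all the work has already been done. If the writing needs to be expanded at all, it is only to note explicitly that $\Aone$-$1$-connected is the same as $\Aone$-simply-connected in the sense used by Proposition \ref{pr:basepointsMayBeIgnored}, which follows directly from the convention that $\bpi_0^{\Aone}$ and $\bpi_1^{\Aone}$ both vanish.
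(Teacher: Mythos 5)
Your proof is correct and is exactly the paper's argument: the inequality $r \le n-2$ gives $n-r-1 \ge 1$, so Proposition \ref{pr:connStiefel} makes $V_r(\A^n)$ $\Aone$-simply-connected, and Proposition \ref{pr:basepointsMayBeIgnored} then yields the bijection. No differences worth noting.
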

\begin{proof}
    This follows from Proposition \ref{pr:basepointsMayBeIgnored} using the connectivity calculation of Proposition \ref{pr:connStiefel}.
\end{proof}

\subsection{Interpretation as spaces of stably free modules}
\label{sec:interpr-as-spac}

The $k$-scheme $V_r(\A^n)$ represents a space of matrices as laid out in \eqref{eq:1}. Given a pair of matrices
$(A,B)$ satisfying $AB^T = I_r$, we may form the split short exact sequence of $R$-modules:
\begin{equation}
    \label{eq:4}
    \begin{tikzcd}
      0 \rar & P \rar{\iota} & R^n \rar["A"'] & R^r \arrow[l, "B^T"', bend right]\rar & 0.
    \end{tikzcd}
\end{equation}
Therefore $(A,B)$ determine an $R$-module $P = \ker(A)$, up to isomorphism over $R^n$, along with an isomorphism $P \oplus R^r \to R^n$, given by
$\iota + B^T$. Conversely, given an $R$-module $P$ and an isomorphism $f: P \oplus R^r \to R^n$,
we may produce a morphism $B^T = f|_{R^r}$ and $A = \proj_2 \circ f^{-1}$. This allows us to say that $V_r(\A^n)$
represents the functor that assigns to a ring $R$ the set of equivalence classes of pairs $(P, f)$ where $P$ is an
$R$-module and $f: P \oplus R^r \to R^n$ is an isomorphism: two pairs $(P, f)$ and $(P', f')$ being equivalent when
there exists an isomorphism $h : P \to P'$ for which the diagram
  \begin{equation}
    \label{eq:8}
    \begin{tikzcd}
      P \oplus R^r \arrow[dd, "h \oplus \id_{R^r}"'] \drar{f} & \\ & R^n \\  P' \oplus R^r \urar["f'"']
    \end{tikzcd}
  \end{equation}
is commutative. In this language, the morphism $\rho : V_r(\A^n) \to V_{r'}(\A^n)$ takes a pair $(P,f)$ to $(P\oplus R^{r-r'}, f)$.

\bigskip
The preceding discussion concerns the functor represented by $k$-scheme $V_r(\A^n)$ on the category of commutative $k$-algebras, viz., on affine $k$-schemes. On the category of all $k$-schemes, $V_r(\A^n)$ represents the functor
\begin{equation*}
  X \mapsto \left\{ A \in \Mat_{r \times n}(\Gamma(X,\sh O_X)) \mid \exists B \in \Mat_{r \times n}(\Gamma(X,\sh O_X)), \text{ s.t. }A B^T  = I_r \right\},
\end{equation*}
since $V_r(\A^n)$ is itself affine and therefore $V_r(\A^n)(X) = V_r(\A^n)(\Spec \Gamma(X, \sh O_X))$ by reference to \cite[II, Exercise 2.4]{Hartshorne1977} for instance.

The matrices $A$ and $B$ of global sections of $\sh O_X$ allow us to set up a split short exact sequence
\begin{equation*}
  \begin{tikzcd}
    0 \rar & \sh P \rar{\iota} & \sh O_X^n \rar["A"'] & \sh O_X^r \arrow[l, "B^T"', bend right]\rar & 0,
  \end{tikzcd}
\end{equation*}
as in the affine case,. Therefore, the $k$-scheme $V_r(\A^n)$ represents the functor that assigns to a $k$-scheme $X$ the set of equivalence classes of pairs $(\sh P, f)$ where $\sh P$ is a locally free $\sh O_X$-module and $f: \sh P \oplus \sh O_X^r \to \sh O_X^n$ is an isomorphism. Two pairs $(\sh P, f)$ and $(\sh P', f')$ are equivalent when a commutative diagram analogous to \eqref{eq:8} exists. Note that the sheaves $\sh P$ of $\sh O_X$-modules appearing here are necessarily coherent.

On $V_{r}(\A^{n})$ itself, there exists a tautological $\sh O_X$-module $\sh P_{\taut}$ and a tautological isomorphism $f_{\taut} : \sh P_\taut \oplus \sh O_{V_{r}(\A^{n})}^r \to \sh O^n_{V_{r}(\A^{n})}$. Since $V_{r}(\A^{n})$ is an affine variety, we may alternatively view the above as a tautological module $P_\taut$ and a tautological isomorphism in the category of modules over the coordinate ring of $V_r(\A^n)$.

\section{Homotopy sections}
\label{sec:stably-free-modules}

\begin{definition}
  Consider the morphism $\rho=\rho_{r,1}: V_r(\A^n) \to Q_{2n-1}$. A \emph{homotopy section} of $\rho$ is a morphism $\psi : Q_{2n-1} \to V_r(\A^n)$ in $\cat{H}(k)$ with the property that $\rho \circ \psi = \id_{Q_{2n-1}}$ in $\cat{H}(k)$. Similarly, a \emph{pointed homotopy section} of $\rho$ is a morphism $\phi: Q_{2n-1} \to V_r(\A^n)$ in $\cat{H}(k)_\bullet$ with the property that $\rho \circ \phi = \id_{Q_{2n-1}}$ in $\cat{H}(k)_\bullet$.
\end{definition}

\begin{example}\label{ex:r=2Sec}
  When $n$ is even, there is a well-known section of $\rho_{2,1}$ in the category of $k$-schemes given by
  \[
    ((a_1, \dots, a_n),(b_1, \dots, b_n)) \mapsto 
    \begingroup 
    \setlength\arraycolsep{2pt}
    \left(
    \begin{bmatrix}
      a_1 & a_2 & \cdots & a_{n-1} & a_n \\
      -b_2 & b_1 & \cdots & -b_n & b_{n-1}
    \end{bmatrix},
    \begin{bmatrix}
      b_1 & b_2 & \cdots & b_{n-1} & b_n \\
      -a_2 & a_1 & \cdots & -a_n & a_{n-1}
    \end{bmatrix}
    \right).
    \endgroup
  \]
  This section gives rise to a pointed homotopy section of $\rho_{2,1}$.
\end{example}

\begin{proposition} \label{pr:lindelPopescu}
  Let $R$ be a regular ring of essentially finite type over $k$. Suppose $f_0 , f_1 : \Spec R \to V_r(\A^n)$ are naively homotopic. Write $P_0$ and $P_1$ for the represented stably free modules. Then $P_0 \iso P_1$ as $R$-modules.
 \end{proposition}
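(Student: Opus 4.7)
The plan is to translate the naive homotopy into a statement about a projective module over $R[t]$ and then invoke the Lindel--Popescu homotopy invariance theorem mentioned in the introduction.

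A naive homotopy between $f_0$ and $f_1$ is, by definition, a morphism $H : \Spec R \times \Aone \to V_r(\A^n)$ with $H \circ j_i = f_i$. Using the description of $V_r(\A^n)$ as the functor that assigns to a ring $S$ the set of equivalence classes of pairs $(P,g)$ consisting of an $S$-module $P$ and an isomorphism $g : P \oplus S^r \isomto S^n$ (Section~\ref{sec:interpr-as-spac}), the homotopy $H$ corresponds to such a pair $(\mathcal{Q},g)$ over $S = R[t]$. In particular $\mathcal{Q}$ is stably free, hence finitely generated projective, as an $R[t]$-module. By functoriality of this representation, restriction along the closed immersions $j_0, j_1$ corresponds to base change along $R[t] \to R[t]/(t) = R$ and $R[t] \to R[t]/(t-1) = R$, so that $P_0 \isom \mathcal{Q}/t\mathcal{Q}$ and $P_1 \isom \mathcal{Q}/(t-1)\mathcal{Q}$ as $R$-modules.

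Since $R$ is regular and essentially of finite type over the field $k$, so is $R[t]$. The Lindel--Popescu theorem \cite{Lindel1981} asserts that every finitely generated projective module over such an $R[t]$ is extended from $R$; that is, there exists a finitely generated projective $R$-module $P$ and an isomorphism $\mathcal{Q} \isom R[t] \otimes_R P$ of $R[t]$-modules. Tensoring this isomorphism with $R[t]/(t)$ and with $R[t]/(t-1)$ over $R[t]$ yields $P_0 \isom P$ and $P_1 \isom P$, whence $P_0 \isom P_1$ as desired.

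There is essentially no technical obstacle here: once the dictionary of Section~\ref{sec:interpr-as-spac} is unwound, the result reduces to a direct citation of Lindel--Popescu. The only small point worth verifying explicitly is that pullback along $j_0$ and $j_1$ agrees with the evident tensor-product operations on $\mathcal{Q}$, but this is automatic from the universal property by which $V_r(\A^n)$ represents its functor.
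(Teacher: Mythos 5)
Your proof is correct and follows essentially the same route as the paper: both arguments pull the universal pair back along the homotopy to get a finitely generated projective $R[t]$-module whose fibres at $t=0$ and $t=1$ are $P_0$ and $P_1$, and then cite Lindel's theorem that projective modules over $R[t]$ are extended from the regular, essentially finite type $k$-algebra $R$. Your write-up is merely a little more explicit about the ``extended from $R$'' formulation and the identification of pullback along $j_0$, $j_1$ with the evaluation base changes.
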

\begin{proof}
  Let $H : \Spec R[t] \to V_r(\A^n)$ be the naive homotopy. By pulling the tautological $\sh P_{\taut}$ back to
  $\Spec R[t]$ along $H$, we obtain a stably free $R[t]$-module $P_H$. The stably free modules $P_i$ are then obtained
  as $P_i \iso R \tensor_{R[t]} P_H$ using the two evaluation homomorphisms $e_0: t \mapsto 0$ and $e_1: t \mapsto 1$.

  Since $R$ is regular, the Lindel--Popescu theorem, specifically the main result of \cite{Lindel1981}, implies that
  $P_0 \iso P_1$.
\end{proof}

\begin{proposition} \label{pr:mainTech}
  Suppose $r,n$ are positive integers such that $\rho : V_{r}(\A^{n}) \to Q_{2n-1}$ has a
  homotopy section. Suppose $X$ is a $k$-scheme and $\sh P$ is a sheaf of $\sh O_X$-modules on $X$ with
  the property that $\sh P \oplus \sh O_X \iso \sh O_X^n$. There exists a sheaf $\sh Q$ of $\sh O_X$-modules and an
  isomorphism $ \sh Q \oplus \sh O_X^{r-1} \iso \sh P$.
\end{proposition}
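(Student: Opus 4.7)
Given the isomorphism $f : \sh P \oplus \sh O_X \iso \sh O_X^n$, the pair $(\sh P, f)$ is classified by an honest scheme morphism $g : X \to V_1(\A^n) = Q_{2n-1}$ via the interpretation recalled in Section~\ref{sec:interpr-as-spac}. Post-composing with the homotopy section $\psi$ produces a class $\psi \circ g \in \{X, V_r(\A^n)\}$ in $\cat{H}(k)$. The plan is to rigidify $\psi \circ g$ to a genuine scheme morphism $h : X \to V_r(\A^n)$ and read off the desired decomposition of $\sh P$ from the module that $h$ classifies.

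I would first treat the smooth affine case $X = \Spec R$. Rigidification is accomplished by \cite[Theorem~2.4.2]{Asok2018}, the comparison theorem advertised in the introduction: for smooth affine $X$ and a sufficiently nice target $Y$, naive homotopy classes of scheme morphisms $X \to Y$ are in bijection with $\{X,Y\}$. Applied with $Y = V_r(\A^n)$, this produces an actual morphism $h : X \to V_r(\A^n)$ representing the class $\psi \circ g$; by Section~\ref{sec:interpr-as-spac} the morphism $h$ classifies a pair $(\sh Q, f')$ with $\sh Q \oplus \sh O_X^r \iso \sh O_X^n$, and hence $\rho \circ h$ classifies $(\sh Q \oplus \sh O_X^{r-1}, f')$. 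Because $\rho \circ \psi = \id_{Q_{2n-1}}$ in $\cat{H}(k)$, the morphisms $\rho \circ h$ and $g$ agree in $\{X, Q_{2n-1}\}$. Invoking \cite[Theorem~2.4.2]{Asok2018} a second time, now to the target $Q_{2n-1}$, forces $\rho \circ h$ and $g$ to be naively homotopic as morphisms of schemes. Proposition~\ref{pr:lindelPopescu} then yields $\sh Q \oplus \sh O_X^{r-1} \iso \sh P$, completing the argument in this case. The extension to arbitrary $k$-schemes I would handle by a limit and descent argument: finite presentation of $\sh P$ together with Popescu approximation reduces the regular affine situation to the smooth affine one, and Zariski patching handles non-affine $X$.

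The main technical obstacle will be verifying the hypotheses of \cite[Theorem~2.4.2]{Asok2018} for the targets $V_r(\A^n)$ and $Q_{2n-1}$ --- namely, some form of $\Aone$-naivety or affine Nisnevich excision on the target side. Since both spaces are homogeneous for a $\GL(n)$ action, the required properties should be available off the shelf, but the bookkeeping needs to be carried out carefully. A secondary concern is executing the reduction to smooth affine $X$ cleanly when $X$ is non-regular or non-affine: this step is less formal than the rigidification argument and may require explicit descent for the property of admitting a rank-$(r-1)$ free summand.
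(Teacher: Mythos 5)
Your argument for smooth affine $X$ is essentially the paper's: rigidify via \cite[Theorem 2.4.2]{Asok2018}, observe that $\rho\circ h$ and $g$ are naively homotopic, and conclude with Proposition \ref{pr:lindelPopescu}. But the proposition is stated for an \emph{arbitrary} $k$-scheme $X$, and your proposed reduction to the smooth affine case is where the proof breaks. The rigidification theorem requires the \emph{source} to be smooth affine, and neither of your two workarounds closes the gap: Lindel--Popescu is a statement about regular rings, and homotopy invariance of projective modules genuinely fails over non-regular rings, so there is no Popescu-approximation route from regular (let alone arbitrary) affine $X$ down to smooth affine $X$; and ``$\sh P$ admits a free summand of rank $r-1$'' is not a Zariski-local property --- locally chosen free summands need not glue --- so Zariski patching does not handle non-affine $X$. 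You flagged this second point yourself as a concern; it is in fact fatal to the route as proposed.

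The missing idea is to never rigidify over $X$ at all. The pair $(\sh P, f)$ is pulled back along $g : X \to Q_{2n-1}$ from the tautological pair $(\sh P_\taut, f_\taut)$, so it suffices to produce an isomorphism $\sh Q_\taut \oplus \sh O_{Q_{2n-1}}^{r-1} \iso \sh P_\taut$ over $Q_{2n-1}$ itself and then apply $g^*$. Since $Q_{2n-1}$ is a smooth affine $k$-variety, your argument applies verbatim with $X = Q_{2n-1}$: rigidify $\psi$ and the homotopy $\rho\circ\psi \simeq \id_{Q_{2n-1}}$ to genuine morphisms of schemes, and invoke Proposition \ref{pr:lindelPopescu}. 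The conclusion for general $X$ then follows by pullback, with no regularity, affineness, or descent needed. This universal-case reduction is the one step your proposal is missing.
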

\begin{proof}
  Fix an isomorphism $f: \sh P \oplus \sh O_X \to \sh O_X^{n}$. The pair $(\sh P, f)$ determines a morphism $h: X \to
  Q_{2n-1}$, and $f: \sh P \oplus \sh O_X \to \sh O_X^{n}$ is pulled back from the tautological
  isomorphism over $Q_{2n-1}$. Therefore it suffices to produce $\sh Q_{\taut}$ over $Q_{2n-1}$ so that $\sh
  Q_{\taut} \oplus \sh O_{Q_{2n-1}}^{r-1} \iso \sh P_{\taut}$. That is, we may suppose that $X = Q_{2n-1}$ and $\sh P=\sh P_{\taut}$.

  By hypothesis, there exists a morphism $\psi:Q_{2n-1} \to V_{r}(\A^{n})$ in $\cat{H}(k)$ with the property that
  $\rho\circ \psi = \id_{Q_{2n-1}}$. Using \cite[Theorem 2.4.2]{Asok2018}, we may suppose that $\psi$ is a morphism in
  the category of $k$-schemes, and that there exists a naive $\Aone$-homotopy
  \[ H :  Q_{2n-1} \times \Aone \to Q_{2n-1} \]
  for which $H_0 = \rho \circ \psi$ and $H_1 = \id_{Q_{2n-1} }$.

  The morphism $\psi : Q_{2n-1} \to V_{r}(\A^{n})$ classifies a pair $(\sh Q, g)$ where
  $g : \sh Q \oplus \sh O_{Q_{2n-1}}^r \iso \sh O^{n}_{Q_{2n-1}}$. The composite $\rho \circ \psi$
  classifies the pair $(\sh Q \oplus \sh O_{Q_{2n-1}}^{r-1}, g)$. The existence of the naive $\Aone$ homotopy implies
  that $\sh Q \oplus \sh O_{Q_{2n-1}}^{r-1}$ is isomorphic to $\sh P$, using Proposition \ref{pr:lindelPopescu}.
\end{proof}

\begin{remark} \label{rem:deformHomotopySections}
  The topological analogues of the maps considered above, i.e.,
  $\rho_\C: W_r(\C^n) \to W_1(\C^n)$ and $\rho_\R: W_r(\R^n) \to W_1(\R^n)$, are Serre fibrations. This means that
  homotopy sections of these continuous functions may be deformed to give strict sections, by a lifting argument. The luxury of deforming a homotopy section to a strict section is unavailable to us in the $\Aone$-homotopy theory.
\end{remark}
In spite of the previous remark, the proposition below can be proved.
\begin{proposition}\label{pr:seciffHomotopySec}
  Let $n,r$ be positive integers satisfying $r \le n-2$. The following are equivalent
  \begin{enumerate}
  \item \label{i:1} The morphism $\rho: V_r(\A^n) \to Q_{2n-1}$ has a section in the category of $k$-schemes;
  \item \label{i:2} The morphism $\rho: V_r(\A^n) \to Q_{2n-1}$ has a homotopy section;
  \item \label{i:3} The morphism $\rho: V_r(\A^n) \to Q_{2n-1}$ has a pointed homotopy section.
  \end{enumerate}
\end{proposition}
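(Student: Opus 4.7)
The implications $(1) \Rightarrow (2)$ and $(3) \Rightarrow (2)$ are formal: a scheme-theoretic section defines a class in $\cat{H}(k)$, and a pointed homotopy class descends to an unpointed one by forgetting basepoints. Hence the substance lies in proving $(2) \Rightarrow (3)$ and $(2) \Rightarrow (1)$.

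For $(2) \Rightarrow (3)$, the plan is to use the connectivity results already in place. Given an unpointed homotopy section $\psi \in \{Q_{2n-1}, V_r(\A^n)\}$, Corollary \ref{cor:pointedUnpointedStiefel} (applicable because $r \le n-2$) furnishes a unique pointed lift $\phi \in [Q_{2n-1}, V_r(\A^n)]$. To check that $\rho \circ \phi = \id_{Q_{2n-1}}$ holds in $\cat{H}(k)_\bullet$, note that $\rho \circ \phi$ and $\id_{Q_{2n-1}}$ are two pointed endomorphisms of $Q_{2n-1}$ that agree in $\cat{H}(k)$. Because $Q_{2n-1} \weq S^{n-1+n\alpha}$ is simply-connected (by \cite[Theorem 6.38]{Morel2012}, since the hypothesis $r \le n-2$ with $r \ge 1$ forces $n \ge 3$), Proposition \ref{pr:basepointsMayBeIgnored} promotes this agreement to the pointed homotopy category.

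For $(2) \Rightarrow (1)$, I would apply Proposition \ref{pr:mainTech} with $X = Q_{2n-1}$ and $\sh P = \sh P_\taut$, the tautological stably free module on $Q_{2n-1} = V_1(\A^n)$. This yields an $\sh O_{Q_{2n-1}}$-module $\sh Q$ together with an isomorphism $h : \sh Q \oplus \sh O_{Q_{2n-1}}^{r-1} \isomto \sh P_\taut$. Writing $f_\taut : \sh P_\taut \oplus \sh O_{Q_{2n-1}} \isomto \sh O_{Q_{2n-1}}^n$ for the tautological trivialization, I would set
\[
g := f_\taut \circ (h \oplus \id_{\sh O_{Q_{2n-1}}}) : \sh Q \oplus \sh O_{Q_{2n-1}}^r \isomto \sh O_{Q_{2n-1}}^n.
\]
The pair $(\sh Q, g)$ classifies a morphism of $k$-schemes $\sigma : Q_{2n-1} \to V_r(\A^n)$. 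By construction of $g$, the composite $\rho \circ \sigma$ classifies $(\sh Q \oplus \sh O_{Q_{2n-1}}^{r-1}, g)$, which is equivalent via $h$ to the pair $(\sh P_\taut, f_\taut)$ under the moduli equivalence relation defining $V_1(\A^n) = Q_{2n-1}$. Consequently $\rho \circ \sigma = \id_{Q_{2n-1}}$ as a morphism of $k$-schemes.

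The main obstacle is $(2) \Rightarrow (1)$, the passage from purely homotopy-theoretic data to an honest morphism of schemes. The two deep inputs---the rigidification theorem \cite[Theorem 2.4.2]{Asok2018} converting $\cat{H}(k)$-morphisms between smooth affine schemes to naive homotopy classes, and the Lindel--Popescu theorem identifying the stably free modules produced by the two endpoints of a naive homotopy---are already packaged inside Proposition \ref{pr:mainTech}. The one extra step, which is what lets the section work on the nose rather than up to homotopy, is the rigidification of the trivialization of $\sh Q$ by $h$: once we rebuild $g$ from $h$ and $f_\taut$, the pair classified by $\rho \circ \sigma$ is literally equivalent to the tautological one, and strict equality $\rho \circ \sigma = \id$ in the category of schemes follows.
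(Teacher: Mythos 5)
Your proof is correct and follows essentially the same route as the paper: \ref{i:2} $\Rightarrow$ \ref{i:3} via the connectivity of $Q_{2n-1}$ and $V_r(\A^n)$ together with Corollary \ref{cor:pointedUnpointedStiefel}, and \ref{i:2} $\Rightarrow$ \ref{i:1} by feeding the tautological module into Proposition \ref{pr:mainTech}. The only cosmetic difference is that where the paper cites \cite[Proposition 2.4]{Raynaud1968} to convert the free summand of $\sh P_\taut$ into a strict scheme-theoretic section, you carry out that conversion by hand using the moduli description of $V_r(\A^n)$ from Section 3, which is a correct inline proof of the relevant direction of Raynaud's result.
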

\begin{proof}
  The implications \ref{i:1} $\Rightarrow$ \ref{i:2} and \ref{i:3} $\Rightarrow$ \ref{i:2} are obvious. 

  (\ref{i:2} $\Rightarrow$ \ref{i:1}). If a homotopy section of $\rho$ exists, then Proposition \ref{pr:mainTech} asserts that the universal projective module $P_{n,n-r}$ has a free summand of rank $r-1$. It follows from \cite[Proposition 2.4]{Raynaud1968} that a section of $\rho$ exists in the category of $k$-schemes. 

  (\ref{i:2} $\Rightarrow$ \ref{i:3}). One may restate Condition \ref{i:3} as saying that the class of the identity map is in the image of 
  \[ [Q_{2n-1}, V_r(\A^n)] \overset{\rho_*}{\longrightarrow} [Q_{2n-1}, Q_{2n-1}].  \]
  Since $1 \le r \le n-2$, both $V_r(\A^n)$ and $Q_{2n-1}$ are $\A^1$-simply connected by Proposition \ref{pr:connStiefel}, so that Corollary \ref{cor:pointedUnpointedStiefel} implies that the vertical arrows in the commuting square below are bijections
  \[
    \begin{tikzcd}
      {[Q_{2n-1},V_r(\A^n)]} \dar["\iso"] \rar["\rho_*"] & {[Q_{2n-1},Q_{2n-1}]} \dar["\iso"] \\
      \{Q_{2n-1},V_r(\A^n)\} \rar["\rho_*"] & \{Q_{2n-1},Q_{2n-1}\}.
    \end{tikzcd}
  \]
  Condition \ref{i:2} asserts that the class of the identity map is in the image of the lower arrow, so that Condition \ref{i:3} follows.
\end{proof}

\section{Realization}
\label{sec:realization}

We refer the reader to \cite{Dugger2004a} for the foundational theory of topological realizations.

We give an overview to fix notation. There are functors
\begin{itemize}
\item Suppose $i: k \hookrightarrow \C$ is an embedding of $k$ in $\C$. There exists \emph{complex realization}, a functor
  $\mathfrak{C} \colon \cat{H}(k)_\bullet \to \cat{H}_\bullet$, which is the composite of $i^*$ and the complex realization of  \cite{Dugger2004a}.
\item Suppose $i: k \hookrightarrow \R$ is an embedding of $k$ in $\R$. There exists \emph{real realization}, a functor
  $\mathfrak{R} : \cat{H}(k)_\bullet \to \cat{H}_\bullet$, which is the composite of $i^*$ and the real realization of  \cite{Dugger2004a}.
\end{itemize}

These realization functors have the following properties:
\begin{itemize}
\item They are compatible with the $\cat{H}_\bullet$-module structure on source and target, i.e., $\mathfrak{C}(K \wedge X)
  \weq K \wedge \mathfrak{C}(X)$ for a pointed simplicial set $K$, and similarly for the real realization.

\item Their values on the quotient schemes $V_r(\A^n) =\GL(n)/\GL(n-r)$ are known. Specifically:
  \[ \mathfrak{C}(V_r(\A^n))\weq W_r(\C^n), \quad  \mathfrak{R}(V_r(\A^n))\weq W_r(\R^n). \]
\item They take the groups $\GL(n)$ to groups.\sebg{to the groups $\U(n)$, $O(n)$, respectively. I suppose this point is subsumed by the previous one.}\benw{Not quite: the point is that the multiplication is preserved. They (strictly) produce $\GL(n, \C)$ and $\GL(n, \R)$.}
\item Their values on the spheres $S^{p+q\alpha}$ are known. Specifically:
   \[ \mathfrak{C}(S^{p+q\alpha})\weq S^{p+q}, \quad  \mathfrak{R}(S^{p+q\alpha})\weq S^p. \]
   Similarly, their values on the motivic Hopf maps of \cite{Dugger2013} are known:
   \begin{gather*} \mathfrak{C}(\eta) =\eta_\top, \quad  \mathfrak{C}(\nu) =\nu_\top; \\
   \mathfrak{R}(\eta) = 2, \quad \mathfrak{R}(\nu) = \eta_\top.\end{gather*}
 \item There are equivalences
   \[  \mathfrak{C}(B\GL(n)) \weq B \GL(n;\C), \quad \mathfrak{R}(B\GL(n)) \weq B \GL(n;\R).\]
\end{itemize}

Assume $p \ge 1$ is an integer. There are homomorphisms of groups, natural in $X$:
\begin{equation}
  \label{eq:5}
  \begin{aligned}
    \bpi_{p+q\alpha}(X)(k) = [S^{p+q\alpha}, X] & \to [S^{p+q}, \mathfrak{C}(X)] = \pi_{p+q}(\mathfrak{C}(X))\\
    \bpi_{p+q\alpha}(X)(k) = [S^{p+q\alpha}, X] & \to [S^{p}, \mathfrak{R}(X)] = \pi_{p}(\mathfrak{R}(X)).
  \end{aligned}
\end{equation}

\subsection{Exactness of realization}

The realization functors we consider do not preserve homotopy fibre sequences in general. Nonetheless, when applied
to the sequences of \eqref{eq:2}, they produce homotopy fibre sequences. As a consequence, we can use realization to
produce commutative diagrams of homotopy groups. We give the formal statement in the case of complex realization only. The other case is similar.

\begin{proposition}
  Complex realization $\mathfrak C$ produces a commutative diagram of long exact sequences of homotopy groups:

  \begin{equation}
    \label{eq:3}
    \adjustbox{scale=0.85}{
     \begin{tikzcd}[column sep=1.4em]
      \cdots \rar  & \bpi_{p+q\alpha}(V_s(\A^{n-r+s}))(k) \dar \rar & \bpi_{p+q\alpha}(V_r(\A^n))(k) \rar \dar & \bpi_{p+q\alpha}(V_{r-s}(\A^n))(k) \arrow[dr, phantom, "\dagger"] \rar[pos=.4,"\bd"] \dar & \bpi_{p-1+q\alpha}(V_s(\A^{n-r+s}))(k) \rar \dar  & \cdots \\
      \cdots \rar  & \pi_{p+q}(W_s(\C^{n-r+s})) \rar & 
      \pi_{p+q}(W_r(\C^n))\rar & \pi_{p+q}(W_{r-s}(\C^s))\rar{\bd} & \pi_{p+q-1}(W_s(\C^{n-r+s})) \rar  & \cdots.
      \end{tikzcd}}
  \end{equation}
  
\end{proposition}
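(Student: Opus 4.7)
The plan is to show that complex realization sends the motivic $\Aone$-fibre sequence \eqref{eq:2} to a topological homotopy fibre sequence which, under the bullet-list identifications $\mathfrak{C}(V_m(\A^n)) \weq W_m(\C^n)$, becomes $W_s(\C^{n-r+s}) \to W_r(\C^n) \to W_{r-s}(\C^n)$. Once both long exact sequences of homotopy groups are in hand, the naturality of the comparison maps \eqref{eq:5} will assemble the ladder \eqref{eq:3}. For the top row I would take the long exact sequence of $\Aone$-homotopy sheaves associated to \eqref{eq:2} and evaluate at $\Spec k$.

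The main obstacle is that complex realization is not exact in general, so one cannot \emph{a priori} conclude that the realization of \eqref{eq:2} is a homotopy fibre sequence. To circumvent this, I would unwind the construction. The sequence \eqref{eq:2} arises, via \eqref{eq:5termsequence}, from the quotient presentations $V_m(\A^n) = \GL(n)/\GL(n-m)$ together with the nested closed inclusions $\GL(n-r) \hookrightarrow \GL(n-r+s) \hookrightarrow \GL(n)$. Complex realization takes the group scheme $\GL(m)$ to the Lie group $\GL(m;\C)$, so the realization of \eqref{eq:2} is modelled by
\[ \GL(n-r+s;\C)/\GL(n-r;\C) \to \GL(n;\C)/\GL(n-r;\C) \to \GL(n;\C)/\GL(n-r+s;\C). \]
This is a locally trivial fibre bundle of smooth manifolds, hence a Serre fibration, and by Gram--Schmidt it is equivalent to the desired sequence of Stiefel manifolds. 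It therefore gives the bottom row of \eqref{eq:3}.

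With both rows of \eqref{eq:3} established as the long exact sequences of genuine (homotopy) fibre sequences, the squares other than $\dagger$ commute directly from the naturality of the comparison maps \eqref{eq:5} applied to the three structure maps of the fibre sequence. For the square marked $\dagger$ containing the boundary map $\bd$, commutativity follows because $\bd$ is, in either long exact sequence, induced by the next map in the Puppe extension of the fibre sequence. Since complex realization identifies the motivic Puppe extension with the topological one---the bullet list records $\mathfrak{C}(B\GL(m)) \weq B\GL(m;\C)$ compatibly with the structure maps---the same naturality argument that handles the other squares also handles $\dagger$. Everything beyond the fibre-bundle observation above is thus formal.
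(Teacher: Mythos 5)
Your plan coincides with the paper's up to the decisive step: the bottom row is exact because $W_s(\C^{n-r+s}) \to W_r(\C^n) \to W_{r-s}(\C^n)$ is a fibre bundle, and every square except the one marked $\dagger$ commutes by naturality of the comparison maps \eqref{eq:5}, since those squares are induced by honest morphisms of schemes. The gap is in your treatment of $\dagger$, which is the only point of substance in the proposition. Knowing that both rows are long exact sequences of fibre sequences does not make the boundary square commute: you still have to compare the two connecting maps. You assert that ``the same naturality argument'' applies because $\bd$ is induced by the next map in the Puppe extension, but that extension continues to the \emph{left}, by a map $\Omega V_{r-s}(\A^n) \to V_s(\A^{n-r+s})$ that exists only in the homotopy category; the maps \eqref{eq:5} are natural for morphisms of motivic spaces, and $\mathfrak C$, being a left adjoint, does not commute with $\Omega$. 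The compatibility ``with the structure maps'' that you invoke for $\mathfrak C(B\GL(m)) \weq B\GL(m;\C)$ is not recorded in the bullet list --- it is precisely what has to be proved, so as written the argument for $\dagger$ is circular rather than formal.

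The paper closes this gap by rewriting $\bd$ as a composite of maps to which naturality \emph{does} apply. Using the five-term sequence \eqref{eq:5termsequence}, $\bd$ factors as
\[ \bpi_{p+q\alpha}(V_{r-s}(\A^n))(k) \xrightarrow{\;f_*\;} \bpi_{p+q\alpha}(B\GL(n-r+s))(k) \xrightarrow{\;\iso\;} \bpi_{p-1+q\alpha}(\GL(n-r+s))(k) \longrightarrow \bpi_{p-1+q\alpha}(V_s(\A^{n-r+s}))(k), \]
where the outer maps are induced by morphisms of motivic spaces, and the middle isomorphism comes from the canonical map $S^1 \wedge \GL(n-r+s) \to B\GL(n-r+s)$, whose adjoint is an $\Aone$-equivalence by \cite[Theorem 6.46]{Morel2012}. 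The essential trick is that this replaces the loop space by a suspension: realization preserves smashing with simplicial sets, so $\mathfrak C$ carries $S^1 \wedge G \to BG$ to its topological counterpart, and naturality of \eqref{eq:5} then gives commutativity of each sub-square, hence of $\dagger$. You should supply this factorization; the rest of your argument stands.
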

\begin{proof}
  The lower sequence is exact because there is a homotopy fibre sequence
  \[ W_s(\C^n) \to W_r(\C^n) \to W_{r-s}(\C^n). \]
  In \eqref{eq:3}, commutativity of the squares other than the one marked with a dagger $\dagger$ follows from the
  naturality of the homomorphism in \eqref{eq:5}. The square marked with the dagger is not \textit{a priori} induced by
  a map of schemes. Nonetheless, it can be factored
  \[
  \adjustbox{scale=0.9}{
\begin{tikzcd}[column sep=1.2em]
   \bpi_{p+q\alpha}(V_{r-s}(\A^n))(k) \dar \rar &  \bpi_{p+q\alpha}(B\GL(n-r+s))(k) \dar \rar[pos=.35,"\iso"] \arrow[dr, phantom, "\ast"]  &  \bpi_{p-1+q\alpha}(\GL(n-r+s))(k) \dar \rar & \bpi_{p-1+q\alpha}(V_s(\A^{n-r+s}))(k) \dar \\
    \pi_{p+q}(W_{r-s}(\C^n)) \rar &  \pi_{p+q}(B\GL(n-r+s; \C))  \rar{\iso} &  \pi_{p+q-1}(\GL(n-r+s;\C))  \rar & \pi_{p+q-1}(W_s(\C^{n-r+s})).
\end{tikzcd}}
\]
The identification $\bpi_{p+q\alpha}(B\GL(n-r+s))(k)= \bpi_{p-1+q\alpha}(\GL(n-r+s))(k)$ is made as follows: there is a
canonical map
\begin{equation} \label{eq:canonical}
   S^1 \wedge G \to BG
\end{equation}
so that any morphism $S^{p+q\alpha} \to G$ yields a morphism $S^{p+1+q\alpha} \to S^1 \wedge G \to BG$ by
composition. When $G= \GL(n-r+s)$, the adjoint to $S^1 \wedge G \to BG$ is an $\Aone$-weak equivalence by \cite[Theorem
6.46]{Morel2012}, so that indeed we obtain the asserted identification.

  Complex realization of \eqref{eq:canonical} yields a similar map $S^1 \wedge G(\C) \to BG(\C)$, so that the square
  marked with an asterisk $\ast$ also commutes.
\end{proof}

\section{Constant homotopy sheaves}
\label{sec:sh-f_k-constant}

Fix a subfield $i: k \into \C$. Throughout this section, $n$ denotes a natural number.

\begin{definition} \label{def:constInjSurj} Suppose $X$ is a pointed motivic space over $k$, and $\bpi_{p + q\alpha}(X)$
  is a strictly $\A^1$-invariant homotopy sheaf. We will say that $\bpi_{p+q\alpha}(X)$ has the \emph{constant}
  (resp.~\emph{surjective}, \emph{injective}) \emph{realization property} if the map
  \[ \bpi_{p+q\alpha}(X)(k) \to \pi_{p+q}(\mathfrak{C}(X)) \]
  is an isomorphism (resp.~surjective, injective).
\end{definition}

\begin{remark}
    If instead we fix a subfield $i : k \into \R$, we may define the \emph{constant} (resp.~\emph{surjective}, \emph{injective}) \emph{real realization property} by using $\mathfrak R$ instead of $\mathfrak C$.
\end{remark}




\begin{example} \label{ex:1stemAC}
 Let $n \ge 4$. The homotopy sheaves $\bpi_{n}(Q_{2n-1})$ may be
   partly calculated from the main theorem of \cite{Rondigs2019} and \cite[Theorem 6.3.6]{Asok2024}. There is an exact sequence
     \begin{equation}
     \begin{tikzcd}
       0 \rar & \K^\M_{n+2}/(24) \rar &  \bpi_{n}(Q_{2n-1}) \rar &  \bpi^s_{1-n\alpha}(\kq) 
     \end{tikzcd}\label{eq:6}
   \end{equation}
   which becomes short exact after $n-4$-fold contraction. Furthermore, $\bpi^s_{1-3\alpha}(\kq)$ is identified with a
   sheaf $\GW^3_4$ as defined in \cite{Schlichting2015} or \cite{Asok2017b} (see \cite[Diagram 3.9]{Asok2020}).

   Therefore, once we contract \eqref{eq:6} $n+1$-times, we obtain a short exact sequence
   \begin{equation}
     \begin{tikzcd}
       0 \rar & \K^\M_1/(24) \rar & \bpi_{n+(n+1)\alpha}(Q_{2n-1}) \rar{q} & \GW^3_0 \rar & 0 
     \end{tikzcd}
     \label{eq:7}
   \end{equation}
   using the identities $(\GW^i_j)_{-1} = \GW^{i-1}_{j-1}$ and $\GW^i_j = \GW^{i+4}_j$. By \cite[Theorem
   10.1]{Walter2003}, the sheaf $\GW^3_0$ is constant on fields, with value $\eta \eta_\top\Z/(2)$---see for instance
   \cite[p.~58]{Rondigs2019}.

   The complex realization of $\eta \eta_\top$ is $\eta_\top^2$, which generates $\pi_{2n+1}(S^{2n-1})$, so that
   $\bpi_{n+(n+1)\alpha}(Q_{2n-1})$ has the surjective realization property.

 \end{example}
 
 \begin{remark}
   If we assume further that $k$ is quadratically and cubically closed, so that $\K^\M_1(k)/(24) \iso 0$, then
   $\bpi_{n+(n+1)\alpha}(Q_{2n-1})$ has the constant realization property.
 \end{remark}
 
 \begin{example} \label{ex:1stemB}
   We may contract \eqref{eq:7} one more
   time. Then we obtain
   \[
     \begin{tikzcd}
       0 \rar & \Z/(24) \rar & \bpi_{n+(n+2)\alpha}(Q_{2n-1}) \rar & 0,
     \end{tikzcd}
   \]
   so that $ \bpi_{n+(n+2)\alpha}(Q_{2n-1})$ is constant with value $\Z/(24)$. It has the constant
   realization property, being generated by the class of $\nu$, which realizes to give $\nu_\top$, which also
   generates $\pi_{2n+2}(S^{2n-1})$.
 \end{example}

 \begin{example}\label{ex:pi2Real}
    Let $n \ge 5$. The homotopy sheaves $\bpi_{n+1}(Q_{2n-1})$ may be
   partly calculated from the main theorem of \cite{Rondigs2024} and \cite[Theorem 6.3.6]{Asok2024}. There is an exact sequence
     \begin{equation*}
     \begin{tikzcd}
       0 \rar & H^{1+n, 2+n}(-)/(24) \oplus \K^\M_{4+n}/(2) \rar & \bpi_{n+1}(Q_{2n-1}) \rar & \bpi^s_{2-n\alpha}(\kq),
     \end{tikzcd}
   \end{equation*}
   and after $n+2$ contractions, this yields an isomorphism
    \begin{equation}
     \label{eq:8b}
     \begin{tikzcd}
       \K^\M_2/(2) \rar{\iso} & \bpi_{n+1+(n+2)\alpha}(Q_{2n-1}).
     \end{tikzcd}
   \end{equation}
   If $k \into \R$ is a field with a real embedding, then the class of $\{-1,-1\}$ in $\K^\M_2(k)/(2) \iso \Hoh_\et^2(k; \Z/(2))$ corresponds to
   $\rho^2\nu^2 \in \bpi_{n+1+(n+2)\alpha}(Q_{2n-1})(k)$. Similarly, after $n+3$ or $n+4$ contractions, the classes of
   $\{-1\} \in \K^\M_1(k)/(2)$ and $1 \in \Z/(2)$ correspond to $\rho \nu^2$ and $\nu^2$.

   Since the real realization of $\rho$ is the identity, and the real realization of $\nu$ is $\eta_\top$, we
   deduce that $\bpi_{n+1+(n+d)\alpha}(Q_{2n-1})$ has the constant real realization property when $d \in \{2,3,4\}$ and  $\K^\M_{4-d}(k)/(2)
   \iso \Z/(2)$.
 \end{example}
 
\section{The existence of homotopy sections}
\label{sec:sections}

Consider the homotopy long exact sequence of
\[ V_{r-1}(\A^{n-1}) \overset{i}{\to} V_{r}(\A^{n}) \overset{\rho}{\to} Q_{2n-1}. \] 
A portion of this sequence appears below:
\begin{equation}
  \label{eq:9}
  \begin{tikzcd}
    \cdots \rar & \bpi_{n-1}(V_{r}(\A^{n})) \rar{\rho_*} & \bpi_{n-1}(Q_{2n-1})\arrow[d,equal] \rar{\bd} & \bpi_{n-2}(V_{r-1}(\A^{n-1}))
    \rar & \cdots. \\
    & & \K^{\MW}_n
  \end{tikzcd}
\end{equation}

\begin{notation} \label{not:defBeta}
Following \cite{James1958}, we denote the image of the identity map under 
\[ 
  \bd_{-n}(k): \bpi_{n-1 + n\alpha}(Q_{2n-1})(k) = [Q_{2n-1},Q_{2n-1}] \to \bpi_{n-2+n\alpha}(V_{r-1}(\A^{n-1}))(k) 
\] 
by $\beta_r^n$ and call this element \emph{the obstruction}.
\end{notation}
The following proposition justifies this terminology.
\begin{proposition} \label{pr:equivOfSections}
  Let $n$ and $r$ be integers with $2 \le r \le n-2$. The following are equivalent:
  \begin{enumerate}
  \item \label{i:i} The morphism $\rho_{1,r}: V_r(\A^n) \to Q_{2n-1}$ admits a section in the category of $k$-schemes;
  \item \label{i:ii} The morphism $\rho_{1,r}: V_r(\A^n) \to Q_{2n-1}$ admits a pointed homotopy section;
  \item The map \[ {\rho_{1,r}}_* : \bpi_{n-1}(V_{r}(\A^{n})) \to \bpi_{n-1}(Q_{2n-1}) = \K^{\MW}_n \] is surjective;
  \item In the homotopy long exact sequence \eqref{eq:9}, the boundary map $\bd: \bpi_{n-1}(Q_{2n-1}) \to \bpi_{n-2}(V_{r-1}(\A^{n-1}))$ vanishes;
  \item \label{i:v} The obstruction $\beta_r^n$ vanishes.
  \end{enumerate}
\end{proposition}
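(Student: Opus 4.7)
The plan is to prove the five conditions equivalent by establishing a cycle of implications. Since \ref{i:i} $\Leftrightarrow$ \ref{i:ii} is already furnished by Proposition \ref{pr:seciffHomotopySec}, the remaining work is in the chain \ref{i:ii} $\Rightarrow$ (3) $\Leftrightarrow$ (4) $\Rightarrow$ \ref{i:v} $\Rightarrow$ \ref{i:ii}, and three of these four implications will be essentially formal.

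For \ref{i:ii} $\Rightarrow$ (3), I will argue that a pointed homotopy section $\psi$ provides a right inverse to ${\rho_{1,r}}_*$ on every set of sections: for any smooth $k$-scheme $U$ and any element $\phi \in \bpi_{n-1}(Q_{2n-1})(U) = [U_+ \wedge S^{n-1}, Q_{2n-1}]_\bullet$, the composite $\psi \circ \phi$ satisfies ${\rho_{1,r}}_*(\psi \circ \phi) = \phi$, which yields sheaf-level surjectivity. The equivalence (3) $\Leftrightarrow$ (4) is then immediate from exactness of the long exact sequence of $\A^1$-homotopy sheaves \eqref{eq:9}, and (4) $\Rightarrow$ \ref{i:v} is tautological, since $\bd = 0$ as a map of sheaves forces its $n$-fold contraction to vanish on every element, and in particular on $\id$.

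The most substantive step is \ref{i:v} $\Rightarrow$ \ref{i:ii}. Here I will contract \eqref{eq:9} $n$ times and then evaluate at $\Spec k$, using the identifications $\bpi_{n-1+n\alpha}(Q_{2n-1})(k) = [Q_{2n-1}, Q_{2n-1}]$ and $\bpi_{n-1+n\alpha}(V_r(\A^n))(k) = [Q_{2n-1}, V_r(\A^n)]$ coming from the $\A^1$-equivalence $Q_{2n-1} \simeq S^{n-1+n\alpha}$. Under the hypothesis \ref{i:v}, the class $\id$ lies in the kernel of $\bd_{-n}(k)$; by exactness of the contracted sequence on $k$-sections, $\id$ lifts to some $\psi \in [Q_{2n-1}, V_r(\A^n)]$ which is precisely a pointed homotopy section. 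The point that most needs care is verifying that evaluation at $\Spec k$ preserves exactness; this is because $\Spec k$ is a Nisnevich point of $\Sm_k$ (every Nisnevich cover is refined by the identity), so $H^{>0}_{\Nis}(\Spec k, -)$ vanishes on all Nisnevich sheaves of abelian groups. No serious obstacle is expected beyond this bookkeeping.
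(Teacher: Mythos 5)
Your proposal is correct and follows essentially the same route as the paper: the forward implications are formal, and the substantive step \ref{i:v} $\Rightarrow$ \ref{i:ii} is the lifting of $\id$ through the fibre sequence, which the paper carries out via the adjoint $S^{n-2+n\alpha} \to \Omega Q_{2n-1}$ and the nullity of its composite into $V_{r-1}(\A^{n-1})$ --- the same exactness statement you extract from the contracted sequence evaluated at $\Spec k$. The only cosmetic difference is that the paper first upgrades $\beta_r^n=0$ to the vanishing of $\bd$ as a morphism of sheaves using \cite[Lemma 5.1.3]{Asok2017a}, whereas you close the cycle of implications without needing that intermediate step; your justification that $k$-sections preserve exactness (since $\Spec k$ is a point of the Nisnevich site) is the right one.
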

\begin{proof}
  The equivalence of \ref{i:i} and \ref{i:ii} is given by Proposition \ref{pr:seciffHomotopySec}, and the other forward implications are trivial. It remains to show \ref{i:v} $\Rightarrow$ \ref{i:ii}. 
  
  Suppose the obstruction vanishes. Using \cite[Lemma 5.1.3]{Asok2017a}, we may identify $\bd$ with a specific element of $\bpi_{n-2+n\alpha}(V_{r-1}(\A^{n-1}))(k)$, which is tautologically $\bd_{-n}(k)(\id) = \beta_r^n = 0$. That is, $\bd=0$ as a morphism of sheaves. 
  
  To construct a pointed homotopy section, we argue as follows.  There is an $\Aone$-homotopy fibre sequence
  \[ \Omega V_r(\A^n) \to \Omega Q_{2n-1} \overset{f}{\to} V_{r-1}(\A^{n-1}) \]
  and the map $\bd$ may be obtained by applying $\bpi_{n-2+n\alpha}$ to $f$. Letting $j$ denote the adjoint of the identity map in $\mathbf H(k)_\bullet$, we obtain a homotopy commutative diagram
  \[
    \begin{tikzcd}[column sep=4em]
      & S^{n-2+n\alpha} \dar{j} \arrow[dr, "\bd"] \arrow[dl, dashed] \\ \Omega V_{r}(\A^{n}) \arrow[r,"\Omega(\rho_{1,r})"'] & \Omega Q_{2n-1} \rar["f"'] & V_{r-1}(\A^{n-1}).
    \end{tikzcd}
  \]
  Since $\bd$ is null, the dashed arrow may be constructed, and by adjunction, a pointed homotopy section of $\rho_{r,1}$ exists.
\end{proof}

\begin{proposition} \label{pr:reductionToRealize} Let $n$ and $r$ be integers with $2 \le r \le n-2$. Let $i : k \into \C$ be a fixed embedding.  Suppose that $\bpi_{n-2+n\alpha}(V_{r-1}(\A^{n-1}))$
  has the injective realization property and that $\rho(\C): W_{r}(\C^{n}) \to S^{2n-1}$ has a section. Then
  $\rho: V_{r}(\A^{n}) \to Q_{2n-1}$ has a section.
\end{proposition}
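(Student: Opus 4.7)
The plan is to translate the question into the vanishing of an obstruction class, and then transport that question to topology via complex realization. By Proposition \ref{pr:equivOfSections}, the morphism $\rho$ admits a section in the category of $k$-schemes if and only if
\[ \beta_r^n \in \bpi_{n-2+n\alpha}(V_{r-1}(\A^{n-1}))(k) \]
vanishes. The injective realization hypothesis further reduces this to showing that the image $\mathfrak{C}(\beta_r^n) \in \pi_{2n-2}(W_{r-1}(\C^{n-1}))$ under the comparison map from \eqref{eq:5} is zero.

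To compute this image, I would invoke the commuting diagram \eqref{eq:3} of long exact sequences, specialized to $s=r-1$ so that the fibre of $\rho$ is $V_{r-1}(\A^{n-1})$, and with $(p,q)=(n-1,n)$. By definition, $\beta_r^n$ is the image of $[\id_{Q_{2n-1}}] \in \bpi_{n-1+n\alpha}(Q_{2n-1})(k)$ under the algebraic boundary map. Since $\mathfrak C$ is a pointed homotopy functor and $\mathfrak{C}(Q_{2n-1}) \weq S^{2n-1}$, the vertical comparison map sends $[\id_{Q_{2n-1}}]$ to $[\id_{S^{2n-1}}]$. Commutativity of \eqref{eq:3}, including the square marked $\dagger$, then forces $\mathfrak{C}(\beta_r^n)$ to coincide with the topological boundary $\bd^\top[\id_{S^{2n-1}}]$ in the long exact sequence of the fibration $W_{r-1}(\C^{n-1}) \to W_r(\C^n) \to S^{2n-1}$.

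Finally, the hypothesis that $\rho(\C)$ admits a section produces a lift of $[\id_{S^{2n-1}}]$ through $\rho(\C)_*$, so by exactness of the topological long exact sequence $\bd^\top[\id_{S^{2n-1}}] = 0$. Combining these steps yields $\mathfrak{C}(\beta_r^n) = 0$, and the injective realization property upgrades this to $\beta_r^n=0$, completing the argument. I do not anticipate a serious obstacle: the work is essentially a naturality diagram-chase in \eqref{eq:3}, together with the exactness of the topological fibre sequence; the only subtlety is lining up the indexing conventions so that the $n$-fold contraction of the algebraic boundary in \eqref{eq:9} matches the square of \eqref{eq:3} at $(p,q)=(n-1,n)$.
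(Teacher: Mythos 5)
Your proposal is correct and follows essentially the same route as the paper: reduce to the vanishing of $\beta_r^n$ via Proposition \ref{pr:equivOfSections}, use the injectivity of the realization map on $\bpi_{n-2+n\alpha}(V_{r-1}(\A^{n-1}))(k)$, and chase the identity class through the commuting square of boundary maps (the $\dagger$ square of \eqref{eq:3}) to identify $\mathfrak{C}(\beta_r^n)$ with the topological boundary of $[\id_{S^{2n-1}}]$, which vanishes because $\rho(\C)$ has a section. The only cosmetic difference is that the paper phrases the last step via the topological analogue of Proposition \ref{pr:equivOfSections}, whereas you use only the trivial direction (section implies vanishing boundary), which is all that is needed.
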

\begin{proof}
  Entirely analogously to Proposition \ref{pr:equivOfSections}, one deduces that
  $\rho(\C): W_r(\C^n) \to S^{2n-1}$ has a section if and only if the boundary homomorphism
  $ \bd(\C): \pi_{2n-1}(S^{2n-1}) \to \pi_{2n-2}(W_{r-1}(\C^{n-1}))$ is $0$, i.e., if it takes the class of the identity
  in $\pi_{2n-1}(S^{2n-1}) = \Z$ to $0$.
  
  There is, therefore, a commutative diagram
  \[
    \begin{tikzcd}[column sep=5em]
     \bpi_{n-1+n\alpha}(Q_{2n-1})(k) \rar{\bd_{-n}(k)} \dar{\mathfrak C} &  \bpi_{n-2+n\alpha}(V_{r-1}(\A^{n-1}))(k) \arrow[d,
      "\mathfrak{C}", hook]\\
    \pi_{2n-1}(S^{2n-1}) \rar{\bd=0} &  \pi_{2n-2}(W_{r-1}(\C^{n-1})).
    \end{tikzcd}
  \]
  From which it follows that $\bd_{-n}(k)=0$, and therefore that $\rho$ has a section, by Proposition
  \ref{pr:equivOfSections}.
\end{proof}

\subsection{The case \texorpdfstring{$r=3$}{r=3}}

\begin{proposition} \label{pr:r=3New}
  Let $n \ge 3$ be an integer. Let $k=\Q$. Then $\rho: V_{3}(\A^{n}) \to Q_{2n-1}$ has a section if and only if $n \equiv 0 \pmod{24}$.
\end{proposition}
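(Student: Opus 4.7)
The plan is to apply Proposition~\ref{pr:equivOfSections} to convert the existence of a section into the vanishing of the obstruction $\beta_3^n \in \bpi_{n-2+n\alpha}(V_2(\A^{n-1}))(\Q)$, and then to analyze this obstruction through the $\Aone$-homotopy fibre sequence $Q_{2n-5} \to V_2(\A^{n-1}) \to Q_{2n-3}$ from \eqref{eq:2} together with the sphere homotopy calculations of Examples~\ref{ex:1stemAC} and \ref{ex:1stemB}.

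For necessity, fix any embedding $\Q \into \C$. Complex realization carries $\beta_3^n$ to the topological obstruction in $\pi_{2n-2}(W_2(\C^{n-1}))$, which by James's theorem is nonzero unless $b_3 = 24$ divides $n$.

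For sufficiency, assume $24 \mid n$, so in particular $n$ is even. Naturality of the iterated fibre sequences relating $V_3(\A^n) \to Q_{2n-1}$ to $V_2(\A^n) \to Q_{2n-1}$ via $\rho_{3,2}$ yields a commutative square in which $\beta_3^n$ maps to $\beta_2^n \in \bpi_{n-2+n\alpha}(Q_{2n-3})(\Q)$. Since $n$ is even, Example~\ref{ex:r=2Sec} provides a genuine section of $\rho_{2,1}$, so $\beta_2^n = 0$ by Proposition~\ref{pr:equivOfSections}. By exactness, $\beta_3^n$ lifts along the fibre inclusion $i: Q_{2n-5} \to V_2(\A^{n-1})$ to some $\tilde\beta \in \bpi_{n-2+n\alpha}(Q_{2n-5})(\Q)$. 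Applying Example~\ref{ex:1stemB} with $n$ replaced by $n-2$ identifies this group with the constant sheaf $\Z/(24)$ and shows complex realization is an isomorphism $\Z/(24) \isomto \pi_{2n-2}(S^{2n-5})$.

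It remains to show $\tilde\beta$ lies in the motivic kernel $K^{\mathrm{mot}} \subseteq \Z/(24)$ of $i_*$. The realization ladder \eqref{eq:3} fits both motivic and topological fibre sequences into a commutative diagram of long exact sequences; in it, $K^{\mathrm{mot}}$ equals the image of the motivic boundary from $\bpi_{n-1+n\alpha}(Q_{2n-3})(\Q)$, while its topological counterpart $K^{\mathrm{top}} \subseteq \Z/(24)$ is the image of $\pi_{2n-1}(S^{2n-3}) \to \pi_{2n-2}(S^{2n-5})$. Because $24 \mid n$, James's theorem forces the realization of $\tilde\beta$ to lie in $K^{\mathrm{top}}$. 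Now apply Example~\ref{ex:1stemAC} with $n$ replaced by $n-1$: the sheaf $\bpi_{n-1+n\alpha}(Q_{2n-3})$ has the surjective realization property via its $\GW^3_0$-summand. Commutativity of the ladder then yields $\alpha(K^{\mathrm{mot}}) = K^{\mathrm{top}}$ under the realization isomorphism $\alpha: \Z/(24) \isomto \Z/(24)$. Hence $\tilde\beta \in K^{\mathrm{mot}}$, so $\beta_3^n = i_*(\tilde\beta) = 0$, and Proposition~\ref{pr:equivOfSections} delivers the section.

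The main obstacle will be this final chase: one must verify that the motivic boundary covers its topological counterpart. The argument succeeds only because Example~\ref{ex:1stemAC} identifies the generator of $\GW^3_0 \iso \Z/(2)$ with $\eta \eta_\top$, whose complex realization $\eta_\top^2$ is precisely the class generating $K^{\mathrm{top}}$; without this exact match, the realization map on $\bpi_{n-2+n\alpha}(Q_{2n-3})$ (a contraction of a Milnor--Witt $K$-theory sheaf, and therefore far from injective) would be too coarse to pin down $\tilde\beta$.
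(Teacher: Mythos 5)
Your proof is correct, and it uses the same essential ingredients as the paper's argument (the fibre sequence $Q_{2n-5} \to V_2(\A^{n-1}) \to Q_{2n-3}$, the computations of Examples~\ref{ex:1stemAC} and~\ref{ex:1stemB}, and the realization ladder), but it is organized differently in two respects worth recording. First, for sufficiency the paper proves that the whole group $\bpi_{n-2+n\alpha}(V_2(\A^{n-1}))(\Q)$ has the injective (in fact constant) realization property and then invokes Proposition~\ref{pr:reductionToRealize}; to run that chase it must show the map $\bpi_{n-2+n\alpha}(V_2(\A^{n-1})) \to \bpi_{n-2+n\alpha}(Q_{2n-3})$ vanishes, which it does by identifying the next map in the sequence with the isomorphism $\times\eta$ on Witt sheaves. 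You instead track only the element $\beta_3^n$: its image in $\bpi_{n-2+n\alpha}(Q_{2n-3})(\Q)$ is $\beta_2^n$, which vanishes because $n$ is even and Example~\ref{ex:r=2Sec} supplies a section of $\rho_{2,1}$. This trades the $\times\eta$ input for an elementary one and is a legitimate simplification, at the cost of proving a statement about one element rather than the stronger realization property of the sheaf. Second, for necessity you realize the obstruction and quote James, whereas the paper cites Raynaud's \'etale-cohomological argument; over $\Q \into \C$ your version is perfectly adequate (and for $n=3,4$, where the hypotheses of Proposition~\ref{pr:equivOfSections} fail, you should note that only the trivial direction ``section $\Rightarrow$ $\bd(\id)=0$'' is needed, which holds without those hypotheses). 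One small inaccuracy in your closing commentary: $\eta_\top^2$ lives in $\pi_{2n-1}(S^{2n-3})$ and does not itself generate $K^{\mathrm{top}} \subseteq \pi_{2n-2}(S^{2n-5})$; what your chase actually uses is only the surjectivity of realization on $\bpi_{n-1+n\alpha}(Q_{2n-3})(\Q)$, so the proof is unaffected.
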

\begin{proof}
  Unless $n$ is a positive multiple of $b_3=24$, then the method of \cite[Th\'eor\`eme 6.5]{Raynaud1968} prevents
  $\rho_{3,1}$ from having a section. Therefore, we assume $n$ is a positive
  multiple of $24$. In this case, \cite{James1958} asserts that a section of $W_3(\C^n) \to S^{2n-1}$ exists. Therefore we need only
  prove that $\pi_{n-2+n\alpha}(V_2(\A^{n-1}))$ has the injective realization property. In fact, it has the constant
  realization property.

  We consider the long exact sequence of homotopy sheaves
  \[
    \begin{tikzcd}[column sep = small]
    & \arrow[d,phantom,""{coordinate, name=a}] & \bpi_{n-1+n\alpha}(Q_{2n-3}) \ar[dll, "\bd"', rounded corners,
    to path={ -- ([xshift=1ex]\tikztostart.east)
            |- (a) [pos=1]\tikztonodes
            -| ([xshift=-2ex]\tikztotarget.west)
		-- (\tikztotarget)}] \\
    \bpi_{n-2+n\alpha}(Q_{2n-5}) \rar & \bpi_{n-2+n\alpha}(V_2(\A^{n-1})) \rar  \arrow[d,phantom,""{coordinate, name=b}]  &
    \bpi_{n-2+n\alpha}(Q_{2n-3}) = \K^{\MW}_{-2} 
    \ar[dll, "\times \eta"', "\iso", rounded corners,
    to path={ -- ([xshift=1ex]\tikztostart.east)
            |- (b) [pos=1]\tikztonodes
            -| ([xshift=-2ex]\tikztotarget.west)
    	-- (\tikztotarget)}] \\
    \bpi_{n-3+n\alpha}(Q_{2n-5}) = \K^{\MW}_{-3} & \phantom{A} &
    \end{tikzcd}
  \]
  The first sheaf appearing has the surjective realization property (Example \ref{ex:1stemAC}) and the next sheaf has
  the constant (\textit{a fortiori}, injective) realization property (Example \ref{ex:1stemB}). The last two sheaves are both isomorphic to the
  Witt sheaf by results of \cite{Morel2012} and the map between them is the isomorphism $\times \eta$, by \cite[Lemma 3.5]{Asok2014}.

  It follows from an easy diagram chase that $\bpi_{n-2+n\alpha}(V_2(\A^{n-1})) $ has the constant realization
  property. We conclude by Proposition \ref{pr:reductionToRealize}.
\end{proof}

\subsection{The case \texorpdfstring{$r=4$}{r=4}}

First we need a technical lemma concerning the obstruction.

\begin{lemma}\label{lem:sMapsTob}
  Let $n$ and $r$ be integers with $2 \le r \le n$, and suppose that $\psi: Q_{2n-1} \to V_{r-1}(\A^n)$ is a pointed homotopy section. Then the obstruction $\beta_r^n$ is the image of $\psi$ under the composition
  \[
    \bpi_{n-1+n\alpha}(V_{r-1}(\A^n))(k) \overset{\bd_{-n}(k)}{\longrightarrow} \bpi_{n-2+n\alpha}(Q_{2(n-r+1)-1}))(k) \overset{(i_*)_{-n}(k)}{\longrightarrow} \bpi_{n-2+n\alpha}(V_{r-1}(\A^{n-1}))(k).
  \]
\end{lemma}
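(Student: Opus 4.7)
The plan is to exhibit $\beta_r^n$ and $i_*\bigl(\bd([\psi])\bigr)$ as two computations of the same connecting homomorphism by introducing an auxiliary $\Aone$-fibre sequence and applying naturality of the connecting map twice. First I would organize the Stiefel fibrations from \eqref{eq:2} into a $3\times 3$ grid of $\Aone$-fibre sequences whose middle row is $V_{r-1}(\A^{n-1}) \to V_r(\A^n) \xrightarrow{\rho_{r,1}} Q_{2n-1}$, whose middle column is $Q_{2(n-r+1)-1} \to V_r(\A^n) \xrightarrow{\rho_{r,r-1}} V_{r-1}(\A^n)$, whose bottom row is $V_{r-2}(\A^{n-1}) \to V_{r-1}(\A^n) \xrightarrow{\rho_{r-1,1}} Q_{2n-1}$, and whose first column is $Q_{2(n-r+1)-1} \xrightarrow{i} V_{r-1}(\A^{n-1}) \to V_{r-2}(\A^{n-1})$. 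The left arrow $i$ of the first column is precisely the inclusion named in the lemma.

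Next I would form the $\Aone$-homotopy pullback $E' := V_r(\A^n) \times^h_{V_{r-1}(\A^n)} Q_{2n-1}$ of $\rho_{r,r-1}$ along $\psi$, producing a new $\Aone$-fibre sequence $Q_{2(n-r+1)-1} \to E' \to Q_{2n-1}$. The defining pullback square is tautologically a map of fibre sequences from $E' \to Q_{2n-1}$ to the middle column, with the identity on the fibre and $\psi$ on the base, so naturality of the connecting map yields $\bd^{E'}(\id_{Q_{2n-1}}) = \bd^{\mathrm{col}}([\psi])$. Meanwhile, the hypothesis $\rho_{r-1,1} \circ \psi = \id_{Q_{2n-1}}$ in $\cat{H}(k)_\bullet$ forces the composite $E' \to V_r(\A^n) \xrightarrow{\rho_{r,1}} Q_{2n-1}$ to coincide with the structure map of $E'$ over $Q_{2n-1}$ (using $\rho_{r,1} = \rho_{r-1,1} \circ \rho_{r,r-1}$ and the other leg of the pullback). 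Consequently the canonical projection $E' \to V_r(\A^n)$ delivers a second map of fibre sequences, now from $E' \to Q_{2n-1}$ to the middle row, covering $\id_{Q_{2n-1}}$; the induced map on fibres is $i$, since both fibres sit inside $V_r(\A^n)$ over the basepoint and their relative inclusion is exactly the left arrow of the first column of the grid. Naturality again gives $\bd^{\mathrm{row}}(\id_{Q_{2n-1}}) = i_*\,\bd^{E'}(\id_{Q_{2n-1}})$, and combining this with the previous identity, then contracting $n$ times and evaluating over $k$, yields the claimed formula.

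The step expected to require the most care is the identification of the induced fibre map with $i$; this amounts to the commutativity of the $3\times 3$ grid of fibrations together with a short diagram chase through basepoints. Once that is settled, the remainder is formal naturality of connecting maps in $\Aone$-homotopy fibre sequences.
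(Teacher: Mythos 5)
Your argument is correct. The underlying input is the same as in the paper's proof: the two quotient fibrations of $V_r(\A^n)$ --- namely $\rho_{r,1}$ over $Q_{2n-1}$ with fibre $V_{r-1}(\A^{n-1})$, and $\rho_{r,r-1}$ over $V_{r-1}(\A^n)$ with fibre $Q_{2(n-r+1)-1}$ --- assemble into a map of $\Aone$-fibre sequences in which the total space $V_r(\A^n)$ is held fixed, the map on fibres is $i$, and the map on bases is $\rho_{r-1,1}$. The paper applies naturality of the connecting map to this single square, obtaining $(i_*)_{-n}(k)\circ \bd_{-n}(k) = \bd_{-n}(k)\circ(\rho_*)_{-n}(k)$, and then evaluates at $[\psi]$ using $(\rho_*)_{-n}(k)[\psi]=[\id]$; the lemma follows in one line. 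Your route reaches the same conclusion by first forming the homotopy pullback $E'$ of $\rho_{r,r-1}$ along $\psi$ and then applying naturality twice, once for the pullback square and once for the projection $E'\to V_r(\A^n)$ over $\id_{Q_{2n-1}}$. This is valid, but the auxiliary space buys you nothing: the map of fibre sequences you already extract from your $3\times 3$ grid (identity on $V_r(\A^n)$) does both jobs simultaneously, and the step you correctly flag as delicate --- that the induced map on fibres is $i$ --- is exactly the same compatibility statement in either formulation. It holds because the inclusion $\GL(n-r+1)/\GL(n-r)\into\GL(n)/\GL(n-r)$ factors through $\GL(n-1)/\GL(n-r)$, the first factor being the fibre inclusion $V_1(\A^{n-r+1})\to V_{r-1}(\A^{n-1})$ named in the lemma. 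One small point to make explicit if you keep the pullback: $\psi$ is only given as a morphism in $\cat{H}(k)_\bullet$, so before forming $E'$ you must represent it by an actual map of (fibrant/cofibrant) motivic spaces and replace $\rho_{r,r-1}$ by a fibration; this is routine but should be stated, and it is one more reason the paper's one-square argument, which never leaves the homotopy category, is preferable.
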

\begin{proof}
  There is a map of $\A^1$-homotopy fibre sequences
  \[
    \begin{tikzcd}
      Q_{2(n-r+1)-1} = V_1(\A^{n-r+1}) \dar["i"] \rar["i"] & V_r(\A^n) \dar[equals] \rar["\rho"] & V_{r-1}(\A^n) \dar["\rho"] \\
      V_{r-1}(\A^{n-1}) \rar["i"] & V_r(\A^n) \rar["\rho"] & Q_{2n-1}
    \end{tikzcd}
  \]
  In particular, the diagram
  \[
    \begin{tikzcd}[column sep=5em]
      \bpi_{n-1+n\alpha}(V_{r-1}(\A^n))(k) \dar["(\rho_*)_{-n}(k)"] \rar["\bd_{-n}(k)"] & \bpi_{n-2+n\alpha}(Q_{2(n-r+1)-1})(k) \dar["(i_*)_{-n}(k)"] \\
      \bpi_{n-1+n\alpha}(Q_{2n-1})(k) \rar["\bd_{-n}(k)"] & \bpi_{n-2+n\alpha}(V_{r-1}(\A^{n-1}))(k)
    \end{tikzcd}
  \]
  commutes. Then
  \[
    ((i_*)_{-n}(k) \circ \bd_{-n}(k))(\psi) = (\bd_{-n}(k) \circ (\rho_*)_{-n}(k))(\psi) = \bd_{-n}(k)(\id_{V_1(\A^n)}) = \beta_r^n,
  \]
  as desired.
\end{proof}

\begin{remark}
    In Proposition \ref{pr:r=4} below, the case of quadratically closed $k$ is redundant, because of the argument of Remark \ref{rem:dontNeedQuadClosed}. We include this case here because the proof is short, and we expect the same proof to apply when $k$ is a quadratically closed field of characteristic greater than $3$.
\end{remark}

\begin{proposition} \label{pr:r=4}
  Let $n \geq 4$ be an integer and $k$ be a subfield of $\C$ satisfying one of the following:
  \begin{enumerate}
    \item $k$ is quadratically closed. 
    \item $k$ is a subfield of $\R$ and admits a unique quadratic extension (up to isomorphism).
  \end{enumerate}
  Then the map $\rho_{4,1}: V_4(\A^n) \to Q_{2n-1}$ has a section if and only if $n \equiv 0 \pmod{24}$.
\end{proposition}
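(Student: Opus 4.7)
The necessity of $n \equiv 0 \pmod{24}$ is \cite[Th\'eor\`eme 6.5]{Raynaud1968}. For sufficiency, assume $n = 24m$ and follow the template of Proposition \ref{pr:r=3New}. First, Proposition \ref{pr:r=3New} (applied over $\Q \subseteq k$ and pulled back along $\Q \into k$) furnishes a pointed homotopy section $\psi: Q_{2n-1} \to V_3(\A^n)$. By Proposition \ref{pr:equivOfSections}, one must verify that the obstruction $\beta_4^n \in \bpi_{n-2+n\alpha}(V_3(\A^{n-1}))(k)$ vanishes; Lemma \ref{lem:sMapsTob} applied to $\psi$ factors
\[
\beta_4^n = (i_*)_{-n}(k)\bigl(\bd_{-n}(k)([\psi])\bigr)
\]
through the intermediate group $G := \bpi_{n-2+n\alpha}(Q_{2n-7})(k)$.

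Next I would identify $G$ and settle Case (1). Writing $Q_{2n-7} = Q_{2(n-3)-1}$ and noting that $n \ge 24$ ensures $n - 3 \ge 5$, Example \ref{ex:pi2Real} (with the parameter $d = 3$, applied to $Q_{2(n-3)-1}$) gives $G \iso \K^\M_1(k)/(2)$, the generator $\{-1\}$ corresponding to $\rho\nu^2$. The same example shows that when $\K^\M_1(k)/(2) \iso \Z/2$, the real realization $G \to \pi_{n-2}(S^{n-4})$ is a bijection of copies of $\Z/2$, sending $\rho\nu^2 \mapsto \eta_\top^2$. In Case (1), $k$ quadratically closed forces $\K^\M_1(k)/(2) = 0$, so $G = 0$ and $\beta_4^n = 0$ automatically.

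In Case (2), $G \iso \Z/2$, and the injectivity of real realization on $G$ reduces the vanishing of $\bd_{-n}(k)([\psi])$ to showing that $\bd_{\mathbb R}([\psi_{\mathbb R}]) = 0$ in $\pi_{n-2}(S^{n-4})$, where $\psi_{\mathbb R} := \mathfrak R(\psi)$. By the Hurwitz--Radon theorem $\rho(24m) \ge 7$, so the topological fibration $W_4(\R^n) \to S^{n-1}$ admits a continuous section $\tilde\psi$; its composition with $W_4(\R^n) \to W_3(\R^n)$ is a real section of $W_3(\R^n) \to S^{n-1}$ with vanishing boundary in $\pi_{n-2}(S^{n-4})$. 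The main obstacle, and the step I expect to require the most care, is that $\psi_{\mathbb R}$ need not be homotopic to this preferred topological section. I would adapt the argument of \cite{James1958}: using the long exact sequences for the fibre sequence $V_2(\A^{n-1}) \to V_3(\A^n) \to Q_{2n-1}$ and for its real realization $W_2(\R^{n-1}) \to W_3(\R^n) \to S^{n-1}$, one compares the motivic and topological indeterminacies in the choice of section and shows that the real realization of the motivic indeterminacy (acting through $\bpi_{n-1+n\alpha}(V_2(\A^{n-1}))(k)$) surjects, modulo the kernel of the relevant boundary map, onto the topological indeterminacy at the level of boundary classes in $\pi_{n-2}(S^{n-4})$. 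Granting this surjectivity, $\psi$ may be adjusted so that $\bd_{\mathbb R}([\psi_{\mathbb R}]) = 0$, whence $\bd_{-n}(k)([\psi]) = 0$ by the injectivity of real realization on $G$, and finally $\beta_4^n = 0$.
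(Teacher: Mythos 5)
Your setup, the use of Lemma \ref{lem:sMapsTob}, the identification of the intermediate group $G = \bpi_{n-2+n\alpha}(Q_{2(n-3)-1})(k) \iso \K^\M_1(k)/(2)$ via Example \ref{ex:pi2Real}, and the disposal of Case (1) all match the paper's proof. The gap is in Case (2). You correctly reduce to showing $\bd_{\R}(\mathfrak R(\psi)) = 0$ in $\pi_{n-2}(S^{n-4})$, but you then treat the fact that $\mathfrak R(\psi)$ need not be the ``preferred'' Hurwitz--Radon section as the main obstacle, and you propose to adjust $\psi$ by comparing motivic and topological indeterminacies of sections. That adjustment rests on an unproven surjectivity claim (that realization of $\bpi_{n-1+n\alpha}(V_2(\A^{n-1}))(k)$ covers the relevant topological indeterminacy at the level of boundary classes), which you explicitly grant rather than establish; as written this is a genuine hole, and it is not an easy computation to supply.

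The paper avoids the adjustment entirely. The point is that \cite[2.1]{James1958} is precisely the topological analogue of Lemma \ref{lem:sMapsTob}: for \emph{any} section $s$ of $W_3(\R^n) \to S^{n-1}$, the element ${i_\R}_*\bigl(\bd_\R(s)\bigr) \in \pi_{n-2}(W_3(\R^{n-1}))$ equals the canonical obstruction to sectioning $W_4(\R^n) \to S^{n-1}$, independently of $s$; and that obstruction vanishes by \cite[Proposition 1.1]{James1957} since $8 \mid 24m$. So one only needs ${i_\R}_* : \pi_{n-2}(S^{n-4}) \to \pi_{n-2}(W_3(\R^{n-1}))$ to be injective to conclude $\bd_\R(\mathfrak R(\psi)) = 0$ with no modification of $\psi$. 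That injectivity is the other ingredient you are missing: it follows from the exact sequence of the fibration $S^{n-4} \to W_3(\R^{n-1}) \to W_2(\R^{n-1})$ together with Paechter's computations $\pi_{n-2}(W_3(\R^{n-1})) \iso \Z/2 \oplus \Z/2$ and $\pi_{n-2}(W_2(\R^{n-1})) \iso \Z/(2)$. With these two facts your ``main obstacle'' disappears, and the rest of your argument (injectivity of realization on $G$, hence $\bd_{-n}(k)(\psi)=0$, hence $\beta_4^n=0$) goes through as you wrote it.
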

\begin{proof}
  As in the $r=3$ case, the method of \cite[Th\'eor\`eme 6.5]{Raynaud1968} allows us to assume that $n$ is a positive multiple of $b_4=24$. Let $\psi: V_1(\A^n) \to V_3(\A^n)$ be a pointed homotopy section (which exists by Proposition \ref{pr:r=3New}), and consider the sequence
  \begin{equation}\label{eq:r=4Ims}
    \bpi_{n-1+n\alpha}(V_3(\A^{n}))(k) \xrightarrow{\bd_{-n}(k)} \bpi_{n-2+n\alpha}(Q_{2(n-3)-1})(k) \xrightarrow{(i_*)_{-n}(k)} \bpi_{n-2+n\alpha}(V_3(\A^{n-1}))(k).
  \end{equation}
  We claim that $(\bd_{-n}(k))(\psi) = 0$. With the help of Proposition \ref{pr:equivOfSections} and Lemma \ref{lem:sMapsTob}, the result follows from this claim. Note that the middle group in \eqref{eq:r=4Ims} is in the stable range (Example \ref{ex:pi2Real}) and coincides with the corresponding stable homotopy group of the motivic sphere spectrum  $\bpi_{2+3\alpha} (\one)(k) = \K_1^M(k)/(2)$ (compare \eqref{eq:8b}).
  
  If $k$ is quadratically closed, we have $\K_1^M(k)/(2) = 0$ by assumption so there is nothing left to be done.
  
  For the second case, real realization induces the commuting diagram
  \begin{equation}\label{eq:r=4Real}
  \begin{tikzcd}[column sep=4.5em]
    \bpi_{n-1+n\alpha}(V_3(\A^{n}))(k) \rar["\bd_{-n}(k)"] \dar["\mathfrak{R}"] & \bpi_{n-2+n\alpha}(Q_{2(n-3)-1}))(k) \rar["(i_*)_{-n}(k)"] \dar["\mathfrak{R}"] & \bpi_{n-2+n\alpha}(V_3(\A^{n-1}))(k) \dar["\mathfrak{R}"] \\
    \pi_{n-1}(W_3(\R^{n})) \rar["\bd"] & \pi_{n-2}(S^{n-4}) \rar["{i_\R}_*"] & \pi_{n-2}(W_3(\R^{n-1}))
  \end{tikzcd}
  \end{equation}
  Under the left vertical map, the class of the section $\psi: Q_{2n-1} \to V_3(\A^n)$ is taken to the class of a
  section $S^{n-1} \to W_3(\R^n)$. It follows from \cite[2.1]{James1958} that the element
  \[ {i_\R}_* \circ \bd \circ \mathfrak{R}(\psi) = ({i_\R}_* \circ \mathfrak{R} \circ \bd_{-n}(k))(\psi) \] is the obstruction to the
  existence of a section of $\rho_\R: W_4(\R^n) \to S^{n-1}$. This obstruction vanishes by \cite[Proposition
  1.1]{James1957}. Moreover, the source and target of the middle realization map in \eqref{eq:r=4Real} are in the stable
  range, so by the assumption and Example \ref{ex:pi2Real}, the middle realization map is an isomorphism. Consequently,
  to prove the claim, it suffices to show that ${i_\R}_*: \pi_{n-2}(S^{n-4}) \to \pi_{n-2}(W_3(\R^{n-1}))$ is injective.
  
  A portion of the long exact sequence in homotopy groups associated with the homotopy fibre sequence
  \[
    S^{n-4} \xrightarrow{i_\R} W_3(\R^{n-1}) \xrightarrow{\rho(\R)} W_2(\R^{n-1})
  \]
  is
  \[
    \pi_{n-2}(S^{n-4}) \xrightarrow{{i_\R}_*} \pi_{n-2}(W_3(\R^{n-1})) \xrightarrow{{\rho_\R}_*} \pi_{n-2}(W_2(\R^{n-1})) \overset{\bd}{\to} \pi_{n-3}(S^{n-4}).
  \]
  The isomorphisms $\pi_{n-2}(W_3(\R^{n-1})) \cong \Z/2 \oplus \Z/2$ and $\pi_{n-2}(W_2(\R^{n-1})) \cong \Z/(2)$ can be read from the tables of \cite{Paechter1956}, from which we conclude that ${i_\R}_*$ is injective.
\end{proof}

\section{The main result}

Proposition \ref{pr:mainTech} and Propositions \ref{pr:r=3New} and \ref{pr:r=4} have the following immediate consequence, which is the main result of this paper.

\begin{theorem}\label{th:schThryMain}
  Suppose $X$ is a scheme over a subfield $k$ of $\C$ and $\sh P$ is a sheaf of $\sh O_X$-modules with the property $\sh P \oplus \sh O_X \iso \sh O_X^{24n}$ for some positive integer $n$. Then there exists a sheaf of $\sh O_X$-modules $\sh Q$ such that 
  \[ \sh P \iso \sh Q \oplus \sh O_X^2. \] 
  Suppose further that $k$ is quadratically closed or is a subfield of $\R$ that admits a unique quadratic extension (up to isomorphism). Then there is a sheaf of $\sh O_X$-modules $\sh Q'$ and an isomorphism 
  \[ \sh P \iso \sh Q' \oplus \sh O_X^3. \]
\end{theorem}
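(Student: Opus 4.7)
The plan is to obtain Theorem \ref{th:schThryMain} as an immediate corollary of the section-existence results Propositions \ref{pr:r=3New} and \ref{pr:r=4}, combined with the module-theoretic translation given by Proposition \ref{pr:mainTech}. Since $k$ is a subfield of $\C$ it has characteristic $0$ and contains $\Q$; the ambient dimension $24n$ is, by construction, a positive multiple of $24$, so the divisibility hypotheses of both section-existence propositions are satisfied.

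For the first statement, I would invoke Proposition \ref{pr:r=3New} (with its dimension parameter equal to $24n$) to obtain a section of $\rho: V_3(\A^{24n}) \to Q_{48n-1}$ in the category of $\Q$-schemes. Base-changing this section along $\Spec k \to \Spec \Q$ produces a section of the corresponding morphism of $k$-schemes, which is in particular a homotopy section. Proposition \ref{pr:mainTech}, applied with $r=3$ and ambient dimension $24n$ to the $\sh O_X$-module $\sh P$ satisfying $\sh P \oplus \sh O_X \iso \sh O_X^{24n}$, then yields an $\sh O_X$-module $\sh Q$ and an isomorphism $\sh Q \oplus \sh O_X^{2} \iso \sh P$, as required.

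For the second statement, the hypothesis on $k$ is exactly what Proposition \ref{pr:r=4} demands, and that proposition supplies a section of $\rho: V_4(\A^{24n}) \to Q_{48n-1}$ in the category of $k$-schemes directly. A second application of Proposition \ref{pr:mainTech}, now with $r=4$, produces an $\sh O_X$-module $\sh Q'$ together with an isomorphism $\sh P \iso \sh Q' \oplus \sh O_X^3$.

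Since all the substantive work—the $\A^1$-homotopical obstruction theory of Section \ref{sec:sections}, the comparison with topological sections via complex and real realization, and the underlying motivic sphere calculations—has been carried out in the earlier sections, there is no genuine obstacle to overcome in this final step; the theorem is a matter of assembly. The only mildly non-trivial point is the base-change step used in part one, which is routine: a section of a morphism of $\Q$-schemes remains a section after pullback along any morphism out of a $\Q$-scheme, in particular along $\Spec k \to \Spec \Q$.
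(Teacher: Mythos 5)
Your proposal is correct and matches the paper's own argument, which likewise derives the theorem as an immediate consequence of Proposition \ref{pr:mainTech} together with Propositions \ref{pr:r=3New} and \ref{pr:r=4}. The base-change step you spell out for the $r=3$ case (transporting the section from $\Q$ to $k$) is implicit in the paper and is handled correctly.
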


\printbibliography
\end{document}
